\theoremstyle{plain}
\newtheorem{theorem}{Theorem}[section]
\newtheorem{lemma}[theorem]{Lemma}
\theoremstyle{definition}
\newtheorem{definition}{Definition}[section]
\newcommand{\ov}{\overline}
\begin{document}
\title{Localization for Uniform Algebras\\
Generated by Real-Analytic Functions}
\author{John T. Anderson}
\address{Department of Mathematics and Computer Science, College of the Holy Cross, Worcester, MA 01610}
\email{janderso@holycross.edu}
\author{Alexander J. Izzo}
\address{Department of Mathematics and Statistics, Bowling Green State University, Bowling Green, OH, 43403 USA}
\email{aizzo@bgsu.edu}

\subjclass[2000]{Primary 46J10, 46J15.  Secondary 32A38, 32A65}
{\maketitle}

\begin{abstract}
It is shown that if $A$ is a uniform algebra generated by real-analytic functions on a suitable compact subset $K$ of a real-analytic variety such that the maximal ideal space of $A$ is $K$, and every continuous function on $K$ is locally a uniform limit of functions in $A$, then $A=C(K)$.  This gives an affirmative answer to a special case of a question from the Proceedings of the Symposium on Function Algebras held at Tulane University in 1965.
\end{abstract}


\section{Introduction} \label{intro}

Let $A$ be a uniform algebra on a compact space $X$, i.e., a subalgebra of the space $C(X)$ of continuous complex-valued functions on $X$, containing the constant functions, separating points of $X$, and closed in the
supremum norm
\[ \|f\|_{X} = \sup\{ |f(x)|: x \in X \}. \]
We let $\mathfrak{M}_{A}$ denote the maximal ideal space of the uniform algebra $A$, as usual identified with the set of all non-zero algebra homomorphisms of $A$ into $\mathbb{C}$.  For $x \in X$, the point evaluation functional $f \mapsto f(x)$ thus defines an element of $\mathfrak{M}_{A}$; we write $\mathfrak{M}_{A} = X$ if each element of $\mathfrak{M}_{A}$ can be identified with point evaluation at some point of $X$.  Throughout the paper, all spaces will be assumed to be Hausdorff.

There are many possible approaches to the problem of characterizing $C(X)$ among the uniform algebras on $X$. For example, a theorem of Rainwater \cite{rainwater} has the following consequence: if every continuous function on $X$ is locally in $A$, then $A = C(X)$:

\begin{theorem} Let $A$ be a uniform algebra on a compact  space $X$, and suppose for each $f \in C(X)$ and each $x \in X$ there exists a neighborhood $U$ of $x$ and a function $g \in A$ with $f = g$ on $U$.  Then $A = C(X)$.  \end{theorem}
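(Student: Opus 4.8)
The plan is to prove the equivalent statement that $A^{\perp} = \{0\}$, where $A^{\perp}$ denotes the space of regular Borel measures $\mu$ on $X$ with $\int g\, d\mu = 0$ for all $g \in A$. Since $A$ is a closed subspace of $C(X)$, the Hahn--Banach theorem and the Riesz representation theorem guarantee that $A = C(X)$ precisely when $A^{\perp} = \{0\}$; and to conclude that a given $\mu$ is zero it suffices, by Riesz representation again, to show $\int f\, d\mu = 0$ for every $f \in C(X)$. So I would fix $\mu \in A^{\perp}$ and an arbitrary $f \in C(X)$ and aim to prove $\int f\, d\mu = 0$.

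Fix such an $f$. The hypothesis provides, for each $x \in X$, an open neighborhood $U_x$ and a function $g_x \in A$ with $f = g_x$ on $U_x$; compactness of $X$ then yields a finite subcover $U_1, \dots, U_n$ with associated functions $g_1, \dots, g_n \in A$ satisfying $f = g_i$ on $U_i$. The temptation is to select a partition of unity $\{\phi_i\}$ subordinate to the cover and write $\int f\, d\mu = \sum_{i=1}^{n} \int g_i \phi_i \, d\mu$. This collapses immediately, however, because $A$ is not a module over $C(X)$: the products $g_i \phi_i$ need not belong to $A$, so the summands have no reason to vanish. The local data $f = g_i$ on $U_i$ is genuine information, but it cannot be globalized by naive cutting and pasting.

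The substance of the argument is thus a localization principle for the annihilating measure $\mu$, and this is exactly what the theorem of Rainwater in \cite{rainwater} supplies. The natural mechanism is Bishop's antisymmetric decomposition: annihilating measures localize to the maximal sets of antisymmetry $S$ of $A$, in the sense that $\mu|_S \in A^{\perp}$ for each such $S$. Since $\mu|_S$ is carried by $S$, it annihilates the uniformly closed restriction algebra $A|_S \subseteq C(S)$, so once one knows that $f|_S$ lies in $A|_S$ one obtains $\int_S f\, d\mu = 0$; assembling these over the decomposition gives $\int f\, d\mu = 0$. The role of the local hypothesis is to force the restrictions $f|_S$ into $A|_S$ — indeed, I would expect the hypothesis to drive each maximal antisymmetry set down to a single point, trivializing $A|_S = C(S)$.

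I anticipate that the main obstacle is precisely this localization step, because annihilating measures do not in general localize under multiplication by continuous cutoff functions, and the passage from local equality on the sets $U_i$ to membership of $f|_S$ in $A|_S$ is where the real work resides. Granting it, the conclusion is immediate: $\int f\, d\mu = 0$ for every $f \in C(X)$ forces $\mu = 0$, hence $A^{\perp} = \{0\}$ and therefore $A = C(X)$.
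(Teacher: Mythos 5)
Your scaffolding is correct as far as it goes: the duality reduction is standard, the localization of annihilating measures to maximal sets of antisymmetry is a genuine theorem (Glicksberg; see Gamelin, \emph{Uniform Algebras}, II.13, which also gives that $A|_S$ is closed in $C(S)$), and your diagnosis of why the naive partition-of-unity argument fails is exactly right. But the proof has a genuine gap precisely at the step you label ``granting it.'' By Bishop's generalized Stone--Weierstrass theorem, $f\in A$ if and only if $f|_S\in A|_S$ for every maximal set of antisymmetry $S$; consequently the claim you defer --- that the local hypothesis forces $f|_S\in A|_S$ for all $f$ and $S$, or that every maximal antisymmetry set is a singleton --- is not a stepping stone but a restatement of the conclusion $A=C(X)$ itself. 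You offer no mechanism for it, and none is visible: exact agreement $f=g_x$ on $U_x$ constrains $g_x$ only on the small set $U_x\cap S$, whereas antisymmetry constrains only functions in $A$ that are real-valued on \emph{all} of $S$, so the two notions do not interact in any evident way. (A secondary, repairable, defect: the final ``assembling'' is not literally valid, since there are in general uncountably many maximal antisymmetry sets and $\mu$ is not the sum of its restrictions to them; the correct assembly is to quote Bishop's theorem directly, or the de\,Branges extreme-point argument plus Krein--Milman. But fixing that still leaves the central claim unproved.)

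There is a sharper indication that the route, as outlined, cannot close without a fundamentally new input: nothing in your sketch uses the \emph{exactness} of $f=g$ on $U$ --- every step is phrased in terms of uniform closures $\overline{A|_S}$ and would apply verbatim under the weaker hypothesis of local uniform approximability. That weaker statement is false as it stands: for the disk algebra $A$ on the circle $X$, every $f\in C(X)$ is locally uniformly approximable (on a proper closed subarc $J$ one has $\overline{A|_J}=C(J)$ by Lavrentiev's theorem), yet $A\neq C(X)$, and the circle is a single maximal set of antisymmetry; and with the additional hypothesis $\mathfrak{M}_A=X$ the approximate version is exactly the open question from \cite{birtel} that motivates this paper. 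So any correct proof must exploit exact local equality, and your framework has no place where it enters. For comparison: the paper itself gives no proof of this statement; it quotes it as a consequence of Rainwater's theorem \cite{rainwater} on regular Banach algebras, applied via the regularity of $C(X)$ --- a mechanism entirely different from the antisymmetric decomposition, and one in which the exact-equality hypothesis does real work.
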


It is an open question (posed in \cite{birtel}, p. 348) whether or not this result
continues to hold if we replace the condition that each function $f$ in $C(X)$ is locally in $A$ by the condition that each function $f$ in $C(\mathfrak{M}_{A})$ is locally uniformly approximable by functions in $A$, i.e., that for each $f \in C(\mathfrak{M}_{A})$ and each $x \in \mathfrak{M}_{A}$ there exists a
neighborhood $U$ of $x$ with $f\big|_{U} \in \overline{A|_{U}}$.  The second author \cite{izzo_localization} established this for algebras on two-manifolds generated by
continuously differentiable functions.  For two-manifolds, via the device of pushing forward measures by functions in the algebra to the complex plane, one may reduce approximation problems to questions about the algebra
$R(K)$ (see \cite{freeman}) consisting of uniform limits on a compact set $K$ in the plane of rational functions with poles off $K$.  The algebra $R(K)$ is known to be local - a function locally in $A$ belongs to $A$ (see \cite[II.10]{Ga}) - and a theorem of Alexander \cite{alexander} provides a generalization of this fact: \emph{if $K$ is the union of a countable collection $\{K_{n}\}$ of compact sets  with $R(K_{n}) = C(K_{n})$ for each $n$, then $R(K) = C(K)$.}
In \cite{izzo_localization} a more general result on $R(K)$ is established, prompting the following definition:

\begin{definition}If $A$ is a uniform algebra on a compact  space $X$, we say that $A$ has the \emph{countable approximation property} if for each $f \in C(X)$ there exists a countable collection $\{M_{n}\}$ of compact subsets of $X$ with $\cup_{n=1}^{\infty} M_{n} = X$ and $f\big|_{M_{n}} \in \overline{A\big|_{M_{n}}}$ for each $n$.   \end{definition}

Note that the sets $M_{n}$ in the preceding definition are allowed to depend on $f$.  Also, if $A$ has the property that each $f \in C(X)$ is locally uniformly approximable by functions in $A$, then $A$ clearly has the countable approximation property.  We can now state the main result of \cite{izzo_localization}:

\begin{theorem}[\cite{izzo_localization}, Theorem~4.1]  Let $M$ be a compact two-dimensional manifold, and let $A$ be a uniform algebra on $M$ generated by $C^{1}$ functions.  Assume that the maximal ideal space of $A$ is $M$, and that $A$ has the countable approximation property. Then $A = C(M)$.  \end{theorem}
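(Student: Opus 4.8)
The plan is to prove $A = C(M)$ through the duality criterion that a uniform algebra $A$ on $M$ equals $C(M)$ precisely when the only finite regular Borel measure $\mu$ on $M$ with $\int f\,d\mu = 0$ for all $f \in A$ is $\mu = 0$. So I would fix an annihilating measure $\mu \perp A$ and work to show it vanishes. The basic device, as in the planar reduction mentioned above, is the pushforward: if $g$ is one of the $C^{1}$ generators of $A$, then $K = g(M) \subset \mathbb{C}$ is compact and, since $\mathfrak{M}_{A} = M$ forces the spectrum of $g$ in $A$ to be exactly $g(M)$, the element $g - \lambda$ is invertible in $A$ for every $\lambda \notin K$. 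Hence $r \circ g \in A$ for each rational function $r$ with poles off $K$, and therefore the pushforward measure $g_{*}\mu$ annihilates $R(K)$. This is the mechanism that carries the problem into the complex plane, where Alexander's theorem and the locality of $R(K)$ are available.

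The crux is a local analysis that uses decisively that $M$ is two-dimensional. At a point $p$ where some generator $g$ has differential $dg_{p}$ of real rank $2$, the map $g$ is a diffeomorphism of a neighborhood $U$ of $p$ onto a planar domain $\Omega$, so I may take $z = g$ as a local coordinate and express every other generator $h$ as a $C^{1}$ function $\varphi_{h}(z)$ on $\Omega$. A dichotomy governed by the Cauchy--Riemann operator now arises. If $\partial\varphi_{h}/\partial\bar z$ is nonzero somewhere for some $h$, then $z$ and $\varphi_{h}(z)$ separate the tangential directions, and a Stone--Weierstrass argument shows that $A$ restricted to a small patch is all of $C$ there. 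If instead every generator is a holomorphic (CR) function of $z$, then $M$ is locally a complex curve, $A$ reduces on the patch to a subalgebra of $R(\overline{\Omega})$ containing the coordinate, and the analytic structure is genuine. Two-dimensionality is exactly what lets a single generator serve as a local coordinate and collapse the local model onto a planar set.

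In the first case the local conclusion $A = C$ is immediate; the work is in the holomorphic case, and this is where the countable approximation property must be spent. Transported through $g$, that hypothesis yields a countable covering of the planar image by compact sets on which the relevant functions are approximable by $R(K)$; combining this with the locality of $R(K)$ and Alexander's union theorem --- which upgrades $R(K_{n}) = C(K_{n})$ on a countable cover to $R(K) = C(K)$ --- forces $R = C$ on the planar sets in play, so the pushforward measures vanish. Because $g$ is injective on each coordinate patch, I can then read back $\mu|_{U} = 0$ on the good open set, and since a Radon measure that vanishes on every member of an open cover is zero, this gives $\mu = 0$ after a separate treatment of the thin critical set where every generator has rank at most $1$, handled by a lower-dimensional pushforward. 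The conclusion $A = C(M)$ follows.

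I expect the main obstacle to be the holomorphic (CR) locus. There $A$ honestly behaves like a proper subalgebra of $R(K)$, so local triviality cannot be obtained by soft means and must be extracted from the planar localization theory, with the countable approximation property --- the single hypothesis beyond $C^{1}$-generation and $\mathfrak{M}_{A} = M$ --- carrying the entire load; indeed a holomorphic disc in $M$ would violate that hypothesis, which is precisely why it is needed. A secondary but delicate difficulty is the bookkeeping that returns from the planar conclusions to the global measure: reconstructing $\mu$ from pushforwards under generators that are only locally injective, and controlling $\mu$ on the critical set and at isolated complex tangencies, where no generator furnishes a coordinate. These localization and critical-set issues, rather than any one computation, are where the genuine care lies.
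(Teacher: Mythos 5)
Your high-level skeleton --- duality, pushing annihilating measures forward into the plane, the locality of $R(K)$ together with Alexander's theorem and its countable-approximation generalization --- does match the route the present paper's introduction attributes to \cite{izzo_localization} (the paper itself only cites this theorem; its own machinery in Sections 2--3 requires real-analytic generators and so does not cover the $C^1$ case). But two of your steps are genuinely broken, and they are the steps carrying the whole load. First, the global assembly. You propose to show $A$ is locally dense in $C$ near good points, deduce $\mu\big|_U = 0$ on each good patch, and conclude $\mu = 0$ since a measure vanishing on every member of an open cover is zero. The last triviality is fine, but you never actually obtain $\mu\big|_U = 0$: an annihilating measure does not restrict, i.e.\ $\mu \in A^{\perp}$ does not give $\mu\big|_U \in \bigl(\overline{A|_U}\bigr)^{\perp}$; and $g_{*}\mu = 0$ with $g$ injective only on $U$ says nothing about $\mu\big|_U$, because for $z \in g(U)$ the fiber $g^{-1}(z)$ generally meets $\mathrm{supp}(\mu)\setminus U$ and cancellation can occur, so "reading back" through local injectivity fails. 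More fundamentally, local approximability of continuous functions by $A$ does \emph{not} imply $A = C(M)$ --- that implication is precisely the open question from \cite{birtel} motivating this entire paper, and Kallin's example \cite{K} shows local-to-global passage fails for general uniform algebras. The legitimate patching device in this circle of ideas is an approximation theorem of the type of Theorem \ref{approximation_theorem}, $A = \{g \in C(M): g\big|_{E} \in \overline{A|_{E}}\}$ with $E$ the exceptional set where all $df_{1}\wedge df_{2}$ vanish, which confines $\mathrm{supp}(\mu)$ to $E$; your proposal invokes nothing of this kind.

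Second, the local dichotomy is misdrawn in both branches. In the totally real branch, density of $A$ on a patch where $dz \wedge d\varphi_{h} \neq 0$ is emphatically not a Stone--Weierstrass argument ($A$ is not conjugate-closed); it is a hard theorem of Wermer--Freeman type \cite{wermer64}, \cite{freeman}, or the $\overline{\partial}$-based Theorem \ref{approximation_theorem}, i.e.\ the same nontrivial input that also does the patching. In the planar reduction, the countable approximation property transfers to $R$ of an image set only where elements of $A$ are locally functions of the single map $g$ --- which is exactly what the degeneracy $df \wedge dg = 0$ on the exceptional set provides; on a generic patch, approximability of $u \circ g$ by $A$ on $M_{n}$ does not yield approximability of $u$ by $R(g(M_{n}))$, since elements of $A$ need not factor through $g$. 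Finally, your trichotomy keyed to the rank of a single generator misclassifies points where every generator has rank at most $1$ but some \emph{pair} satisfies $df_{1}\wedge df_{2} \neq 0$ (e.g.\ two real-valued generators): such points are totally real, not critical, and your proposed cure for the critical set --- one-dimensional pushforwards giving $g_{*}\mu = 0$ for each generator separately --- cannot recover $\mu = 0$, since $\int u(g)\,d\mu = 0$ for all $g$ and all continuous $u$ does not control integrals of products $u_{1}(g_{1})u_{2}(g_{2})$ of functions of different generators.
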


Our goal in this paper is to establish a similar theorem in the context of real-analytic varieties of arbitrary dimension.  That a result along these lines should hold at least in the context of real-analytic manifolds of dimension three was suggested by Wermer (private communication).

Throughout the paper, given a real-analytic subvariety $V$ of an open set $\Omega \subset \mathbb{R}^n$, and given a subset $K$ of $V$, whenever we say
\lq\lq a collection $\Phi$ of functions real-analytic on $K$,\rq\rq\ we mean that to each member $f$ of $\Phi$ there corresponds a neighborhood of $K$ in $\mathbb{R}^n$, that may depend on the function $f$, to which $f$ extends to be real-analytic.  Also, $\partial K$ will denote the boundary of $K$ relative to $V$, and ${\rm int}(K)$ will denote the interior of $K$ relative to $V$.

The precise statement of our main result is as follows.

\begin{theorem} \label{main_theorem} Let $V$ be a real-analytic subvariety of an open set $\Omega \subset \mathbb{R}^n$, and let $K$ be a compact subset of $V$ such that $\partial K$ is a real-analytic subvariety of $V$.  Let $A$ be a uniform algebra on $K$ generated by a collection $\Phi$ of functions real-analytic on $K$.  Assume that the maximal ideal space of $A$ is $K$, and that $A$ has the countable approximation property. Then $A = C(K)$.
\end{theorem}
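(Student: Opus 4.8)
The plan is to argue by induction on the dimension $d$ of the variety $V$, reducing at each stage to a real-analytic subvariety of strictly smaller dimension. Two features of the hypotheses will be exploited throughout: that real-analyticity forces the relevant ``bad'' sets to be subvarieties, hence either all of $V$ or of lower dimension; and that the condition $\mathfrak{M}_{A}=K$ rules out analytic structure in the spectrum, which is exactly what is needed both to invoke approximation theorems on totally real sets and to control the exceptional locus.

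First I would carry out a local reduction to finitely many generators. Near any point of $K$, finitely many members $f_{1},\dots,f_{N}$ of $\Phi$ separate points and generate $A$ locally; writing $F=(f_{1},\dots,f_{N})$, the map $F$ embeds a neighborhood of that point into $\mathbb{C}^{N}$ as a real-analytic set, and the hypothesis $\mathfrak{M}_{A}=K$ translates into local polynomial convexity of the image. This transports approximation questions about $A$ into questions about $P(X)$ for compact $X\subset\mathbb{C}^{N}$, where the classical machinery is available. Next I would stratify $V$ into its regular strata, real-analytic manifolds of dimensions $\le d$, noting that the singular locus of $V$ together with $\partial K$ (which is assumed to be a subvariety) contributes only lower-dimensional pieces. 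On the top-dimensional stratum $M$, let $\Sigma$ be the set of points at which $M$ fails to be totally real with respect to $F$, i.e.\ where $dF(T_{p}M)\cap i\,dF(T_{p}M)\neq\{0\}$. Since this is the vanishing locus of real-analytic functions, $\Sigma$ is a real-analytic subvariety of $M$, and I would argue that $\Sigma\neq M$: if total reality failed on an open subset of $M$, the resulting CR structure of positive dimension would, via CR extension, enlarge the polynomial hull and hence the maximal ideal space, contradicting $\mathfrak{M}_{A}=K$. Thus $\dim\Sigma<d$, and on the totally real set $M\setminus\Sigma$ the approximation theory for totally real real-analytic manifolds yields local uniform approximation of every continuous function by functions holomorphic near the image, hence by $A$.

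The remaining engine is the reduction to lower dimension and the induction. Using an O'Farrell--Preskenis--Walsh-type reduction together with the local polynomial convexity established above, and the local maximum modulus principle to handle the boundary and singular strata, I would reduce global approximation on $K$ to approximation on the closed exceptional set $E=\Sigma\cup(\text{lower strata})\cup\partial K\cup\mathrm{Sing}(V)$, which lies in a real-analytic subvariety $V'$ with $\dim V'<d$. Set $K'=K\cap V'$ and $A'=\overline{A|_{K'}}$. The countable approximation property is inherited by $A'$, since intersecting the covering sets $M_{n}$ for $f$ with $K'$ produces a covering of $K'$ on whose pieces $f|_{K'}$ is approximable; and $\partial K'$ is again a subvariety. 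The inductive hypothesis then gives $A'=C(K')$, so $f|_{E}$ is approximable, and the reduction yields $f\in A$; the base case $d=0$ is immediate. It is here, in promoting the piecewise approximability coming from the countable approximation property to genuine local approximation on the strata (an Alexander-type local-to-global step), that the countable approximation property is genuinely consumed.

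The hard part, I expect, is twofold. First, establishing rigorously that positive CR dimension on an open piece of a stratum is incompatible with $\mathfrak{M}_{A}=K$, so that $\Sigma$ is genuinely a proper subvariety rather than all of $M$; this is the crux of the geometric input and must be handled with care, since positive CR dimension does not by itself produce analytic disks. Second, propagating the hypotheses cleanly down the induction---in particular verifying that $\mathfrak{M}_{A'}=K'$, which I anticipate following from the $A$-convexity of $K'$ together with the local maximum modulus principle, and then assembling the local approximations on the various strata into a single global approximation of $f$ on $K$.
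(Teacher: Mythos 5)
Your outline reproduces the paper's announced heuristic, but both of its load-bearing steps are gaps, and the first is one you flag yourself. The properness of the degeneracy locus $\Sigma$ is \emph{not} obtained in the paper from $\mathfrak{M}_{A}=K$ via CR extension and enlargement of hulls, and it is doubtful it could be: positive CR dimension on a (possibly non-minimal) real-analytic CR manifold does not produce analytic disks, and Cole-type examples show that $\mathfrak{M}_{A}=K$ with $A\neq C(K)$ is compatible with the total absence of analytic structure, so ``CR structure would enlarge the spectrum'' has no rigorous form here. The CR discussion in Section~\ref{prelims} is explicitly heuristic only. What the paper actually does (Lemma~\ref{exceptional set empty interior}) is elementary and consumes the countable approximation property directly: by Baire category (Lemma~\ref{uniform lemma}) every open set contains a compact $N$ with nonempty interior on which $\overline{A|_{N}}=C(N)$; then, given $\omega=df_{1}\wedge\cdots\wedge df_{k}$ not identically zero, one chooses a small ball $B$ and a continuous $g$ with $\int_{\partial B}g\,\omega\neq 0$, approximates $g$ on $\partial B$ by a polynomial $h$ in the generators, and Stokes' theorem gives $\int_{B}dh\wedge\omega\neq 0$, yielding $f_{k+1}$ with $df_{1}\wedge\cdots\wedge df_{k+1}\not\equiv 0$. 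Relatedly, your claim that $\mathfrak{M}_{A}=K$ ``translates into local polynomial convexity of the image'' is unjustified for restrictions: the maximal ideal space of $\overline{A|_{Y}}$ is the $A$-convex hull $\widehat{Y}$, and proving $\widehat{Y}=Y$ (Lemma~\ref{inheritance lemma}) itself requires the countable approximation property, through Lemma~\ref{uniform lemma} and Rossi's local peak set theorem, not just the local maximum modulus principle.

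The second gap is structural: you reduce to a single lower-dimensional variety $V'$ containing $\Sigma$, the lower strata, $\mathrm{Sing}(V)$, and $\partial K$, but the paper explicitly warns that this fails --- the singular set is only \emph{locally} contained in a proper subvariety, the exceptional set is a subvariety only off the singular set (Lemma~\ref{exceptional_set_variety}), and it is not clear that their union lies, even locally, in a proper subvariety of $V$. The paper's workaround is the pair of filtrations: the E-filtration $\Sigma_{k+1}=(\Sigma_{k})_{\Phi}$, defined on the complement of the accumulated singular sets, and the S-filtration of singular sets, handled by two interleaved inductions. Working dually with an annihilating measure $\mu\in A^{\perp}$ (which also replaces your nontrivial ``assemble local approximations globally'' step), Lemma~\ref{support_in_S_filtration} uses Theorem~\ref{approximation_theorem} repeatedly to push $\mathrm{supp}(\mu)$ into the S-filtration, Lemma~\ref{no isolated support} (via the Shilov idempotent theorem) disposes of the zero-dimensional stage, and Lemma~\ref{support_in_boundary} runs an induction on dimension, ball by ball, using the local containment of each singular set in a proper subvariety to restart the filtration at lower dimension; Theorem~\ref{support_theorem} then gives $\mathrm{supp}(\mu)\cap\mathrm{int}(K)=\emptyset$, and a second application with $V$, $K$ both replaced by $\partial K$ (legitimate because $\partial K$ is assumed to be a subvariety, and using Lemma~\ref{inheritance lemma}) forces $\mu=0$. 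So while your top-level strategy matches the paper's heuristic, the two mechanisms that make it work --- the Baire/Stokes argument for the exceptional set and the E-/S-filtration double induction --- are missing from your proposal and would have to be rebuilt.
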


Since $A$ satisfies the countable approximation property if each function in 
$C(K)$ is locally uniformly approximable by functions in $A$,
the above theorem answers the question from \cite{birtel} mentioned above in the special case of uniform algebras generated by real-analytic functions.
The choice of this setting was suggested by work of the authors and Wermer
 (\cite{anderson_izzo_two_manifolds}, \cite{anderson_izzo_wermer_three_dimensions}, \cite{anderson_izzo_wermer_varieties}, \cite{anderson_izzo_smooth_manifolds},  \cite{anderson_izzo_varieties}) on the peak-point conjecture.
This conjecture concerned a possible characterization of $C(X)$ which was shown to be false in general by Cole \cite{Co} but is true in a number of special cases of interest.  For a review of the peak-point conjecture to 2010, see the survey \cite{izzo_survey} of the second author; our proofs in the present paper follow closely the proofs in \cite{anderson_izzo_varieties}, with necessary modifications.  For convenience of the reader we repeat the general construction and details of the proofs.

We also remark that if the variety $V$ is a compact holomorphically convex subset of $\mathbb{C}^n$, a theorem of Stout \cite{stout_varieties} implies, without additional hypotheses, that the algebra $\mathcal{O}(V)$ consisting of the uniform closure of functions holomorphic in a neighborhood (dependent on the function) of $V$ is equal to $C(V)$.  In particular, this implies that if $V$ is polynomially convex (respectively rationally convex), then the algebra $P(V)$ of uniform limits of polynomials on $V$ (respectively $R(V)$, the uniform limits of rational functions holomorphic in a neighborhood of $V$) satisfies $P(V) = C(V)$ (respectively $R(V) = C(V)$).

 To prove Theorem \ref{main_theorem} we will argue using duality: to show that $A = C(X)$ it suffices to prove that if $\mu$ is a measure supported on $X$ with $\mu \in A^{\perp}$, i.e., such that
\[ \int f d\mu = 0 \mbox{ for all }f \in A, \]
then $\mu = 0$.  As will be shown at the beginning of Section \ref{proof}, Theorem~\ref{main_theorem} is an easy consequence of the following theorem.

\begin{theorem} \label{support_theorem} Let $V$ be a real-analytic subvariety of an open set $\Omega \subset \mathbb{R}^n$, and let $K$ be a compact subset of $V$.  Let $A$ be a uniform algebra on $K$ generated by a collection $\Phi$ of functions real-analytic on $K$.  Assume that the maximal ideal space of $A$ is $K$ and that $A$ satisfies the countable approximation property.  Then $\mu \in A^{\perp}$ implies\/ ${\rm supp}(\mu) \cap {\rm int}(K) = \emptyset$. \end{theorem}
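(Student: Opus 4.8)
The plan is to argue by induction on the dimension $d$ of $V$, the case $d=0$ being trivial, and to reduce the whole statement to a single local assertion: a point $p$ lying in $\mathrm{int}(K)$, at which $V$ is regular and (after mapping by the generators) totally real, cannot belong to $\mathrm{supp}(\mu)$. So I would fix $\mu \in A^\perp$ and a point $p \in \mathrm{int}(K)$ and suppose for contradiction that $p \in \mathrm{supp}(\mu)$. First I would dispose of the regular, totally real points, and then show that the part of $\mathrm{supp}(\mu)$ remaining in $\mathrm{int}(K)$ is concentrated on a real-analytic subvariety of dimension strictly less than $d$, to which the induction hypothesis applies.

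For the local analysis near a regular point $p$, the device is to choose finitely many generators $\phi_1,\dots,\phi_m \in \Phi$ so that $F=(\phi_1,\dots,\phi_m)$ realizes a neighborhood $N$ of $p$ in $V$ as a real-analytic submanifold of $\mathbb{C}^m$; since $A$ separates points and $\mathfrak{M}_A=K$, the algebra $A$ is locally modeled by polynomials on $F(N)$. The decisive dichotomy is whether $F(N)$ is totally real at $F(p)$. If it is, the classical approximation theory for totally real real-analytic submanifolds of $\mathbb{C}^m$ — local polynomial convexity together with local uniform approximation of continuous functions by polynomials — shows that every $f\in C(K)$ is, on a sufficiently small neighborhood of $p$, a uniform limit of functions in $A$. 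Feeding this local approximation, together with the countable approximation property, into a Baire category argument applied to $\mathrm{supp}(\mu)$ (the mechanism underlying Alexander's theorem \cite{alexander} and its abstract refinement in \cite{izzo_localization}) forces $\mu$ to carry no mass near $p$, contradicting $p\in\mathrm{supp}(\mu)$.

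It remains to treat the points of $\mathrm{int}(K)$ at which $V$ is singular or is regular but not totally real. Here real-analyticity is what makes the induction possible: the singular locus $V_{\mathrm{sing}}$ is a subvariety of dimension $<d$, and the locus where $F(N)$ fails to be totally real is the common zero set of the real-analytic functions measuring complex tangency, hence — provided $N$ is not \emph{nowhere} totally real — a proper subvariety of $N$. Thus $\mathrm{supp}(\mu)\cap\mathrm{int}(K)$ is contained in a real-analytic subvariety $B\subseteq V$ with $\dim B<d$. Since $\mu$ is then supported in $B$ it annihilates $A|_B$, which is again generated by functions real-analytic on $B$; and because a point of $\mathrm{int}(K)$ lying in $B$ automatically lies in the interior of $K\cap B$ relative to $B$, the induction hypothesis (once one checks that the conditions $\mathfrak{M}=B$ and the countable approximation property descend to the restriction) yields $\mathrm{supp}(\mu)\cap\mathrm{int}(K)\cap B=\emptyset$, completing the argument.

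The step I expect to be the main obstacle is the remaining case in which $N$ is \emph{nowhere} totally real, i.e. carries a nontrivial analytic (CR) structure, for then the bad locus $B$ is all of $N$ and the dimension fails to drop. This is exactly the configuration that in the two-manifold setting is resolved by pushing $\mu$ forward to the plane and invoking $R(K)=C(K)$; generalizing that device is the crux. The plan is to project by a suitable generator $g$ to $\mathbb{C}$, disintegrate $\mu$ over the fibers of $g$ — which are real-analytic subvarieties of dimension $d-2$ — and, using Alexander's theorem together with the countable approximation property, reduce the vanishing of $\mu$ near $p$ to that of the fiber measures, to which the induction hypothesis applies. Verifying that the maximal ideal space and countable approximation hypotheses survive both the restriction to $B$ and this fiberwise reduction is the technical heart of the proof, and is where the ``necessary modifications'' of the arguments of \cite{anderson_izzo_varieties} must be carried out.
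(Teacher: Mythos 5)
There are genuine gaps, and the most serious one is in the step you treat as routine. You claim that once $F(N)$ is totally real at $F(p)$, local polynomial approximation plus ``a Baire category argument applied to $\mathrm{supp}(\mu)$'' forces $\mu$ to have no mass near $p$. No such inference exists: an annihilating measure for $A$ annihilates functions \emph{in} $A$, not functions merely uniformly approximable by $A$ on a neighborhood, and the passage from local approximability to conclusions about $A^\perp$ is precisely the open localization question (from \cite{birtel}) that the theorem addresses. To localize the support of $\mu$ one needs peak-set or global approximation machinery, not Baire category; indeed the abstract result you allude to from \cite{izzo_localization} (approximately local $+$ countable approximation property $\Rightarrow A=C(X)$) takes approximate localness as a \emph{hypothesis}, and whether it holds here is explicitly open (question (ii) in the paper's introduction). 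The paper's actual mechanism is Izzo's approximation theorem (\cite{izzo_approx_on_manifolds}, Theorem~1.3, quoted as Theorem \ref{approximation_theorem}): if the differential condition $df_1\wedge\cdots\wedge df_m\neq 0$ holds at \emph{every} point of $X\setminus E$, then $A=\{g\in C(X): g|_E\in A|_E\}$ globally, and this is what pushes $\mathrm{supp}(\mu)$ into $E$. Relatedly, your ``crux'' case --- $N$ nowhere totally real --- is exactly the case the paper proves \emph{cannot occur}: Lemma \ref{exceptional set empty interior} shows, via a Stokes' theorem argument ($\int_B dh\wedge\omega=\int_{\partial B}h\,\omega\neq 0$ for a polynomial $h$ in the generators) powered by the countable approximation property through Lemmas \ref{uniform lemma} and \ref{inheritance lemma}, that the exceptional set has empty interior. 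This is where the countable approximation property really enters the proof, and it eliminates your hardest case at the outset; your substitute plan (disintegrating $\mu$ over fibers of a generator $g$) is left entirely unexecuted and is doubtful on its face, since conditional fiber measures of $\mu\in A^\perp$ need not annihilate the fiber algebras --- the two-manifold device works because of special planar facts about $R(K)$ (Vitushkin-type localness, Alexander's theorem), which have no analogue on $(d-2)$-dimensional fibers.

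Your dimension-reduction step also elides a difficulty the paper flags explicitly: you place $\mathrm{supp}(\mu)\cap\mathrm{int}(K)$ inside a single subvariety $B$ containing both $V_{\mathrm{sing}}$ and the non--totally-real locus, with $\dim B<d$. But $V_{\mathrm{sing}}$ is only \emph{locally} contained in proper subvarieties, the exceptional locus is a subvariety only of the complement of $V_{\mathrm{sing}}$ (Lemma \ref{exceptional_set_variety}), and it is not obvious --- the paper says so in Section \ref{prelims} --- that their union is, even locally, contained in a subvariety of dimension $<d$. The paper circumvents this by treating the two sets separately via the E-filtration $\Sigma_k=(\Sigma_{k-1})_\Phi$ and the S-filtration $(\Sigma_k)_{\mbox{\tiny sing}}$, with a double induction (Lemmas \ref{support_in_S_filtration} and \ref{support_in_boundary}): first Theorem \ref{approximation_theorem} is applied iteratively to push $\mathrm{supp}(\mu)$ into the union of singular sets, and then a second induction on dimension, done \emph{locally} in small balls where the singular sets do lie in proper subvarieties, empties the support. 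Finally, the inheritance step you defer with ``once one checks'' is itself nontrivial: proving that $\mathfrak{M}_{\ov{A|_Y}}=Y$ for closed $Y$ (Lemma \ref{inheritance lemma}) consumes the countable approximation property again, through Lemma \ref{uniform lemma} and Rossi's local peak set theorem. So while your overall skeleton (kill totally real regular points, drop dimension, induct) matches the paper's strategy in spirit, the three load-bearing steps --- the measure-localization at totally real points, the nowhere-totally-real case, and the union-of-bad-sets reduction --- are each either invalid as stated or missing.
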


In Section \ref{prelims} we collect some preliminary lemmas and comment on the general outline of the proof of Theorem~\ref{support_theorem}, which is presented in Section \ref{proof}.

We conclude this introduction by commenting on the relationship of the results in this paper with the concepts of a local (or approximately local) algebra, and a regular (or approximately regular) algebra.  Given a uniform algebra $A$ with maximal ideal space $\mathfrak{M}_{A}$, we say that $f \in C(\mathfrak{M}_{A})$ \emph{belongs locally to $A$} if for each point $x \in \mathfrak{M}_{A}$ there is a neighborhood $U$ of $x$ in $\mathfrak{M}_{A}$ and a function $g \in A$ with $f = g$ on $U$, and we say that $f$ is \emph{locally approximable by $A$} if for each $x \in  \mathfrak{M}_{A}$ there is a neighborhood $U$ of $x$ in $\mathfrak{M}_{A}$ on which $f$ is a uniform limit of functions in $A$.  (Here we identify elements of $A$ with their extensions to elements of $C(\mathfrak{M}_{A})$ via the Gelfand transform.)  We say that $A$ is \emph{local} if each function belonging locally to $A$ is in $A$, and that $A$ is \emph{approximately local} if each function locally approximable by $A$ is in $A$.  Note that every approximately local algebra is local.  The first example of a nonlocal uniform algebra was given by Kallin \cite{K}; a result of Izzo \cite{izzo_localization}, \cite{izzo_nonlocal} asserts that nonlocal algebras, with $C^{\infty}$ generators, exist on any compact $C^{\infty}$ manifold of dimension at least three.

We say that a uniform algebra $A$ is \emph{regular} if for each closed subset $E$ of $\mathfrak{M}_{A}$ and point $p \in \mathfrak{M}_{A}\setminus E$, there exists $f \in A$ with $f$ vanishing on $E$ while $f(p) = 1$.  We say that $A$ is \emph{approximately regular} if for each closed subset $E$ of $\mathfrak{M}_{A}$ and point $p \in \mathfrak{M}_{A}\setminus E$, and each $\epsilon > 0$, there exists $f \in A$ with $\|f\| < \epsilon$ while $f(p) = 1$.

Connecting these concepts, a result of Wilken \cite{Wilken} states that every regular algebra is approximately local.  In the same paper Wilken proved that every approximately regular algebra is two-local, a version of locality in which one allows just two sets on which the given function must agree with an element of the algebra.  More precisely, we say that $A$ is \emph{two-local} if whenever $f \in C(\mathfrak{M}_{A})$ has the property that there exist open sets $U_{1}, U_{2} \subset\mathfrak{M}_{A}$ with $U_{1} \cup U_{2} = \mathfrak{M}_{A}$, and functions $g_{1}, g_{2} \in A$ with $f\big|_{U_{j}} = g_{j}$  ($j=1,2$), then $f \in A$.

It is easy to check that $A$ is approximately regular if and only if it has the following property: for each closed subset $E$ of $\mathfrak{M}_{A}$,
\begin{equation} \mathfrak{M}_{\ov{A|_{E}}} = E. \label{eq: max ideal restriction} \end{equation}
It is shown below (Lemma \ref{inheritance lemma}) that (\ref{eq: max ideal restriction}) holds if $A$ has the countable approximation property as an algebra on $\mathfrak{M}_{A}$, and (\ref{eq: max ideal restriction}) holds also (Lemma 2.3 of \cite{anderson_izzo_smooth_manifolds}) if every point of $\mathfrak{M}_{A}$ is a peak point for $A$.  Thus the algebras considered here and in \cite{anderson_izzo_smooth_manifolds} are two-local by Wilken's theorem.

To the best of the authors' knowledge, it is unknown whether every approximately regular uniform algebra is local.  In particular, we may ask (i) if $A$ is a uniform algebra such that each point of $\mathfrak{M}_{A}$ is a peak point for $A$ is $A$ necessarily local (or approximately local)? (ii) if $A$ is a uniform algebra on its maximal ideal space with the countable approximation property is $A$ necessarily local (or approximately local)?  By Theorem~2.3 of \cite{izzo_localization}, if $A$ is a uniform algebra on a compact metrizable space $X$ such that $A$ is both approximately local and satisfies the countable approximation property, then $A = C(X)$.  Therefore if the answer to question (ii) in the case of approximate localness is affirmative, then the countable approximation property actually characterizes $C(X)$ among the uniform algebras $A$  on a compact metrizable space $X$ with $\mathfrak{M}_{A} = X$.

\section{Preliminaries} \label{prelims}

We will make use of the following result of the second author. This type of theorem has a long history, going back to work of Wermer
\cite{wermer64} and \cite{wermer} and Freeman \cite{freeman} in the 1960's - for a detailed account, see \cite{izzo_approx_on_manifolds}.

\begin{theorem}[\cite{izzo_approx_on_manifolds}, Theorem~1.3] \label{approximation_theorem}
Let $A$ be a uniform algebra on a compact
 space $X$, and suppose that the maximal ideal space of $A$ is $X$.
Suppose also that $E$ is a closed subset of $X$ such that $X \setminus E$
is an $m$-dimensional manifold and such that
\begin{enumerate}
\item for each point $p \in X \setminus E$ there are functions
$f_1, \ldots, f_m$ in $A$ that are $C^1$ on $X \setminus E$ and satisfy $df_1
\wedge \ldots \wedge df_m (p) \neq 0$, and
\item  the functions in $A$ that are $C^1$ on $X \setminus E$ separate points
on $X$.
  \end{enumerate}
Then $A= \{g \in C(X) : g | E \in A|E \}$.  \end{theorem}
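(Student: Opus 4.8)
The inclusion $A \subseteq \{g \in C(X) : g | E \in A|E\}$ is immediate, so the whole content is the reverse inclusion: every $g \in C(X)$ with $g | E \in A|E$ lies in $A$. The plan is to attack this by duality together with Bishop's antisymmetric decomposition. Since $A$ is uniformly closed, by the bipolar theorem it suffices to show that every such $g$ is annihilated by every measure $\mu$ on $X$ with $\mu \perp A$. Invoking Glicksberg's decomposition of annihilating measures over the maximal sets of antisymmetry, $\mu$ splits into pieces $\mu_S$, each carried by a single maximal set of antisymmetry $S$ and each still annihilating $A$. Everything then reduces to the claim that \emph{every maximal set of antisymmetry $S$ for $A$ is either contained in $E$ or is a single point.} Granting this, a piece carried by $S \subseteq E$ annihilates $g$ because on $E$ the function $g$ agrees with some $\phi \in A$ and $\mu_S \perp A$, while a piece carried by a singleton annihilates $g$ because $\mu_S$ already annihilates the constants; hence $\int g \, d\mu = 0$ and $g \in A$. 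This is the abstract uniform-algebra analogue of the O'Farrell--Preskenis--Walsh theorem on totally real sets.

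The geometric input comes from hypothesis (1). Fixing $p \in X \setminus E$ and the functions $f_1, \dots, f_m \in A$ furnished there, the map $F = (f_1, \dots, f_m) : X \setminus E \to \mathbb{C}^m$ has at $p$ a Jacobian that is nonsingular precisely because $df_1 \wedge \cdots \wedge df_m(p) \neq 0$; hence $F$ is a $C^1$ embedding of a neighborhood of $p$ onto a submanifold $M \subseteq \mathbb{C}^m$. The nonvanishing of the complex $m$-form $df_1 \wedge \cdots \wedge df_m$ on a manifold of real dimension $m$ is exactly the condition that $M$ be \emph{totally real}. Because each $f_j$ lies in $A$, every holomorphic polynomial in $z_1, \dots, z_m$ pulls back under $F$ to an element of $A$.

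I would then invoke the classical local approximation theorem on totally real manifolds (H\"ormander--Wermer, in the sharpened form of Baouendi--Tr\`eves): on a totally real $C^1$ manifold, every $C^1$ function is, near each point, a uniform limit of functions holomorphic in an ambient neighborhood. Shrinking the chart so that the relevant compact piece of $M$ is polynomially convex (small totally real pieces are locally polynomially convex) and applying the Oka--Weil theorem, these holomorphic functions are in turn uniform limits of polynomials, so after pulling back by $F$ one concludes that \emph{near each point $p \in X \setminus E$ every $C^1$ function is a uniform limit of elements of $A$.} Hypothesis (2), guaranteeing that the functions in $A$ that are $C^1$ off $E$ separate the points of $X$, is what lets this local coordinate data be used consistently across all of $X$.

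The main obstacle is the passage from this local approximation to the global antisymmetry claim: a maximal set of antisymmetry could a priori meet the chart around $p$, wander through other charts, and accumulate on $E$, whereas local approximation alone produces functions in $A$ controlled only on a small patch. To close this gap I would argue by contradiction: if a maximal set of antisymmetry $S$ contains a point $p \in X \setminus E$ together with a second point, I would use the local approximation to manufacture a function in $A$ that is nearly real and nonconstant on $S$ near $p$, and then control its behavior off the patch by Rossi's local maximum modulus principle (legitimate since $\mathfrak{M}_{A} = X$), contradicting antisymmetry. Equivalently, in the dual formulation, one shows that the pushforward under $F$ of a suitable cutoff of $\mu$ is orthogonal to all polynomials on the totally real manifold $M$ and hence vanishes there, forcing $\mathrm{supp}(\mu) \cap (X \setminus E) = \emptyset$. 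Making the cutoff step rigorous --- approximating a compactly supported bump on $M$ by elements of $A$ and absorbing the resulting error off the patch --- is the technical heart of the argument.
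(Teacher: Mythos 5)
Your front end is sound, but note first that the paper does not actually prove Theorem~\ref{approximation_theorem}: it is imported wholesale from \cite{izzo_approx_on_manifolds}, where its proof is the principal content of that paper, so the benchmark is that argument. The reductions you make are correct as far as they go: duality plus Glicksberg's decomposition of annihilating measures reduces the theorem to the claim that each maximal set of antisymmetry is a singleton or lies in $E$ (and that claim is in fact equivalent to the theorem, so the decomposition by itself buys nothing); hypothesis (1) is indeed equivalent to $F=(f_1,\ldots,f_m)$ being a $C^1$ immersion near $p$ with totally real image; and the H\"ormander--Wermer/Baouendi--Tr\`eves approximation plus pullback of polynomials correctly yields $\overline{A|_{K_p}}=C(K_p)$ for suitable small compact neighborhoods $K_p$ of points $p\in X\setminus E$, granting the bookkeeping with hypothesis (2) needed to keep distant fibers of $F$ away from $F(p)$, which you only gesture at.

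The genuine gap is the step you yourself defer as ``the technical heart'': the local-to-global passage \emph{is} the theorem, and the tools you name demonstrably cannot close it. Take $X$ the unit circle in $\mathbb{C}$, $A=P(X)$, $E=\emptyset$, $f_1=z$: hypotheses (1) and (2) hold, and $\overline{A|_{K_p}}=C(K_p)$ for every small arc $K_p$, yet $A\neq C(X)$ and the unique maximal set of antisymmetry is all of $X$. So everything you actually establish is consistent with the failure of your key claim; the only hypothesis this example lacks is $\mathfrak{M}_{A}=X$, hence that hypothesis must do the real work in the global step, and your sketch invokes it only to license Rossi's local maximum modulus principle. Rossi cannot deliver the contradiction: a maximal antisymmetry set $S$ is an intersection of peak sets, so $B=\overline{A|_S}$ has maximal ideal space $S$, but local density produces $b\in B$ nearly real only on $S\cap K_p$ with no control whatever on $S\setminus K_p$, and the local maximum estimate is vacuous because nothing prevents $S\cap\mathrm{int}(K_p)$ from meeting the Shilov boundary of $B$ (in the circle example the patch lies entirely inside it). The dual variant fails at the first move: the restriction of $\mu\perp A$ to a compact set is again annihilating only for generalized peak sets, and the patches here are not such, so $F_*(\chi\mu)$ is simply not orthogonal to the polynomials; the error term is the problem, not a technicality to be absorbed. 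Finally, observe that if your soft closing argument (local density, antisymmetry, Rossi) were valid, it would apply verbatim to every uniform algebra on its maximal ideal space that is locally approximable, answering affirmatively the open question of \cite{birtel} discussed in Section~\ref{intro} --- the very question this paper settles only for real-analytic generators and with substantial machinery. Izzo's actual proof exploits $\mathfrak{M}_{A}=X$ through the holomorphic functional calculus (Arens--Calder\'on: $h(f_1,\ldots,f_N)\in A$ whenever $h$ is holomorphic in a neighborhood of the joint spectrum $F(X)$), so that $F_*\mu$ annihilates all functions holomorphic near $F(X)$ rather than merely polynomials, and then brings H\"ormander--Wermer $\bar\partial$-methods on totally real manifolds to bear on these genuinely global annihilators to sweep $\mathrm{supp}(\mu)$ into $E$; none of that is recoverable from your sketch.
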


As the proof of Theorem \ref{support_theorem} is fairly technical we begin with some heuristics.  Suppose that in a neighborhood $N$ of a regular point $p$ of the variety $V$, we can find functions $f_{1}, \ldots, f_{m}$ in the algebra $A$ such that $F = (f_{1}, \ldots , f_{m})$ provides a smooth imbedding of $N$ into $\mathbb{C}^m$.   The condition $df_1
\wedge \ldots \wedge df_m (p) \neq 0$ appearing in Theorem \ref{approximation_theorem} implies that $F(N)$ is totally real at $F(p)$; on the other hand, if $df_1 \wedge \ldots \wedge df_m (p)$ vanishes on a relatively open subset of $F(N)$ then $F(N)$ contains a CR-manifold of positive CR dimension, such that functions in $A$ correspond to CR functions.  This would contradict the countable approximation property, which is essentially the assumption that the algebra is locally dense in the continuous functions. 

As we show below, the set on which condition (1) of Theorem \ref{approximation_theorem} fails - call it the exceptional set -  is a subvariety of the regular points of $V$; the preceding argument suggests that it has no interior relative to $V$, and so is a proper subvariety of $V$.  Theorem \ref{approximation_theorem} then implies that we can reduce approximation on $V$ to approximation on the union of the exceptional set and the singular points of $V$.  One would like to then use induction to reduce approximation (i.e., the support of a putative annihilating measure $\mu$) on $V$ to approximation on a sequence of varieties of decreasing dimension, until the dimension is zero (i.e., the variety is a discrete set), and then to conclude that the support of an annihilating measure must be empty.  It is not obvious, however, that the \emph{union} of the exceptional set and the singular set, even locally, must itself be contained in a subvariety of
$V$ of dimension less than that of $V$.  To get around this difficulty, we proceed as in \cite{anderson_izzo_smooth_manifolds}, treating the exceptional set and singular set separately, introducing a filtration of $V$ into exceptional sets and singular sets of decreasing dimensions, then first showing by induction on decreasing dimension of the exceptional sets that the support of any annihilating measure $\mu$ must lie
in the singular set.  We then use induction again on a decreasing sequence of singular sets to reduce the support of $\mu$ to the empty set.

We now prove a series of lemmas culminating in the statement that the exceptional set has empty interior (Lemma \ref{exceptional set empty interior}). The Section concludes with a review of some elementary facts about real-analytic varieties. 

\begin{lemma} \label{uniform lemma} Suppose $X$ is a compact space that is first countable and separable,
and $A$ is a uniform algebra on $X$ with the countable approximation property.  Then there exists a compact subset $N$ of $X$ with nonempty interior such that $\overline{A\big|_{N}} = C(N)$. \end{lemma}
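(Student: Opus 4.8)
The plan is to prove the lemma by two applications of the Baire category theorem, linked by a countable $\pi$-base for $X$ that the hypotheses of first countability and separability provide. The point is that the countable approximation property supplies, for each individual $f$, a \emph{fat} set on which $f$ is approximable, and the role of the $\pi$-base is to force all these candidate sets to come from a single countable family, so that a second category argument can then extract one set good for every $f$ at once.

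First I would extract the $\pi$-base. Choosing a countable dense set $\{x_k\}$ (separability) together with a countable base $\{U_{k,j}\}_j$ of open neighborhoods at each $x_k$ (first countability), the countable family $\{U_{k,j}\}_{k,j}$ is a $\pi$-base: any nonempty open $W$ contains some $x_k$, and then some $U_{k,j}$ with $x_k \in U_{k,j} \subseteq W$. Enumerating this family as $\{V_i\}$ and setting $K_i = \overline{V_i}$, each $K_i$ is compact (as $X$ is compact) and has nonempty interior (it contains $V_i$).

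Next I would show $C(X) = \bigcup_i H_i$, where $H_i = \{f \in C(X) : f|_{K_i} \in \overline{A|_{K_i}}\}$. Fix $f \in C(X)$. By the countable approximation property choose compact sets $M_n$ with $X = \bigcup_n M_n$ and $f|_{M_n} \in \overline{A|_{M_n}}$. Since a compact Hausdorff space is a Baire space, some $M_{n_0}$ has nonempty interior; as $\{V_i\}$ is a $\pi$-base there is an index $i_0$ with $V_{i_0} \subseteq {\rm int}(M_{n_0})$, and hence $K_{i_0} = \overline{V_{i_0}} \subseteq M_{n_0}$. Restricting an approximating sequence from $M_{n_0}$ down to the smaller set $K_{i_0}$ shows $f|_{K_{i_0}} \in \overline{A|_{K_{i_0}}}$, i.e. $f \in H_{i_0}$. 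This establishes the countable cover.

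Finally, each $H_i$ is a closed linear subspace of the Banach space $C(X)$, being the preimage of the closed subspace $\overline{A|_{K_i}}$ under the continuous restriction map $C(X) \to C(K_i)$. Since $C(X)$ is complete and equals the countable union $\bigcup_i H_i$, the Baire category theorem forces some $H_{i_0}$ to have nonempty interior, and a linear subspace with nonempty interior is the whole space; thus $H_{i_0} = C(X)$. Because every element of $C(K_{i_0})$ extends by Tietze to an element of $C(X)$, this says exactly $\overline{A|_{K_{i_0}}} = C(K_{i_0})$, so $N = K_{i_0}$ is the desired set. The main obstacle here is conceptual rather than computational: a measure annihilating $A$ need not restrict to a measure annihilating $A|_M$, so one cannot simply localize a single annihilating measure; the device that circumvents this is the countable $\pi$-base, which replaces the a priori uncountable family of candidate \lq\lq fat\rq\rq\ sets by a countable one and thereby makes the second Baire argument in $C(X)$ available. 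I expect the only steps needing care to be the verification of the $\pi$-base property and the passage from approximation on $M_{n_0}$ to approximation on the subset $K_{i_0} \subseteq M_{n_0}$.
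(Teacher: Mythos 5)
Your proposal is correct and is essentially the paper's own argument: the paper likewise makes one Baire category application in $X$ (using the countable approximation property to find, for each $f$, a set $M_n$ with nonempty interior) and then invokes Lemma~\ref{izzo lemma}, whose proof is precisely your countable $\pi$-base together with a second Baire application to the closed subspaces of $C(X)$. The only differences are cosmetic: you inline the two steps, work with the closed sets $K_i=\overline{V_i}$ from the outset rather than the basic open sets, and make explicit the Tietze extension step that the paper leaves implicit.
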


To prove this we will need the following lemma.  As the proof is short, we include it for the reader's convenience.

\begin{lemma}[\cite{izzo_localization}, Lemma~2.4] \label{izzo lemma} Let $A$ be a uniform algebra on a compact space $X$ that is first countable and separable.  Suppose that for each function $f \in C(X)$ there is a nonempty open subset $U$ (depending on $f$) of $X$ such that $f\big|_{U} \in \overline{A\big|_{U}}$.  Then there exists a compact set $N$ with nonempty interior such that $\overline{A\big|_{N}} = C(N)$. \end{lemma}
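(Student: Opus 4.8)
The plan is to run a Baire category argument in the Banach space $C(X)$, exploiting the fact that the hypothesis exhibits $C(X)$ as a \emph{countable} union of closed linear subspaces. For each nonempty open set $U \subseteq X$ I set
\[ G_U = \{ f \in C(X) : f\big|_U \in \overline{A\big|_U} \}. \]
First I would record two elementary properties of these sets. Since each $g \in A$ is bounded on $X$, the restriction algebra $A\big|_U$ and its sup-norm closure $\overline{A\big|_U}$ are linear subspaces of the bounded functions on $U$; consequently $G_U$ is a linear subspace of $C(X)$. Moreover $G_U$ is norm-closed: if $f_k \to f$ uniformly on $X$ with each $f_k \in G_U$, then $f_k\big|_U \to f\big|_U$ uniformly on $U$, and hence $f\big|_U \in \overline{A\big|_U}$.

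The next step is to produce a single countable family of nonempty open sets that suffices, and this is where first countability and separability enter. Choosing a countable dense set $\{d_j\}$ and, by first countability, a countable open neighborhood base $\{B_{j,k}\}_k$ at each $d_j$, the countable family $\{B_{j,k}\}_{j,k}$ is a $\pi$-base: every nonempty open $U$ contains some $d_j$ by density, and then some $B_{j,k} \subseteq U$ since the $B_{j,k}$ form a neighborhood base at $d_j$. Using this I claim $C(X) = \bigcup_{j,k} G_{B_{j,k}}$. Indeed, given $f \in C(X)$ the hypothesis supplies a nonempty open $U$ with $f\big|_U \in \overline{A\big|_U}$; choosing $B_{j,k} \subseteq U$ and restricting an approximating sequence shows $f \in G_{B_{j,k}}$.

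Now the Baire category theorem applies, since $C(X)$ is a complete metric space expressed as a countable union of closed sets: some $G_{B_{j,k}}$ has nonempty interior. The key observation is that a norm-closed linear subspace of a normed space with nonempty interior must be the whole space; hence $G_{B_{j,k}} = C(X)$, that is, $f\big|_{B_{j,k}} \in \overline{A\big|_{B_{j,k}}}$ for \emph{every} $f \in C(X)$. To finish I would choose a point $p \in B_{j,k}$ and, using that $X$ is compact Hausdorff and therefore normal, an open set $W$ with $p \in W \subseteq \overline{W} \subseteq B_{j,k}$; then $N = \overline{W}$ is compact with nonempty interior. Given $h \in C(N)$, extend it by the Tietze theorem to $f \in C(X)$; an approximating sequence for $f$ on $B_{j,k}$ restricts to one for $h$ on $N \subseteq B_{j,k}$, so $\overline{A\big|_N} = C(N)$.

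The main obstacle is the combined bookkeeping of the first two steps: recognizing that the $G_U$ are \emph{closed subspaces}, so that Baire's conclusion of nonempty interior can be upgraded to the equality $G_{B_{j,k}} = C(X)$, and manufacturing a single countable $\pi$-base from the two topological hypotheses so that the category argument has only countably many sets to work with. Once these are in hand, the passage to the compact set $N$ via normality and Tietze extension is routine.
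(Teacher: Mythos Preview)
Your proof is correct and follows essentially the same approach as the paper's: build a countable $\pi$-base from separability and first countability, write $C(X)$ as the union of the closed linear subspaces $G_U$ over this base, apply Baire, and upgrade ``nonempty interior'' to ``whole space'' because closed subspaces are involved. The only cosmetic difference is your final step: the paper simply takes $N=\overline{B_{j,k}}$ (using that $B_{j,k}$ is dense in its closure and the approximants are continuous on all of $X$), whereas you shrink to $N=\overline{W}\subseteq B_{j,k}$ and invoke Tietze explicitly---both work and the underlying idea is the same.
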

\begin{proof} Choose a countable dense subset $\{x_{n}\}$ of $X$, and about each $x_{n}$, choose a countable basis $\{V_{m}^{n}\}_{m=1}^{\infty}$.  Then the collection $\{ V_{m}^n: m,n \in \mathbb{Z}_{+} \}$ is a countable collection of open sets such that each open subset of $X$ contains at least one of them.  For each $m$ and $n$, let $Y_{m}^n = \{ f \in C(X): f|V_{m}^{n} \in \overline{A|V_{m}^n}\}$. By hypothesis, $\cup_{m,m = 1}^{\infty} Y_{m}^n = C(X)$.  Applying the Baire category theorem shows that some $Y_{m}^{n}$ must fail to be nowhere dense in $C(X)$.  Since $Y_{m}^{n}$ is clearly a closed vector subspace of $C(X)$, we then have $Y_{m}^{n} = C(X)$.  Setting $N = \overline{V_{m}^{n}}$ gives the conclusion of the lemma.
\end{proof}

\begin{proof}
[Proof of Lemma~\ref{uniform lemma}]
Fix $f \in C(X)$.
By hypothesis, there exists a countable collection $\{M_{n}\}$ of compact subsets of $X$ with $\cup_{n=1}^{\infty} M_{n} = X$ and $f\big|_{M_{n}} \in \overline{A\big|_{M_{n}}}$ for each $n$.  By the Baire Category theorem, there exists $n$ such that $M_{n}$ has nonempty interior.  Applying Lemma~\ref{izzo lemma} with $U = \mbox{int}(M_{n})$ establishes the result. \end{proof}

The following lemma is analogous to Lemma~2.3 of \cite{anderson_izzo_smooth_manifolds}.

\begin{lemma} \label{inheritance lemma} Suppose $A$ is a uniform algebra on a compact space $X$ that is first countable and separable.
Suppose also that the maximal ideal space of $A$ is $X$ and that $A$ has the countable approximation property.  If $Y$ is a closed subset of $X$, then
\begin{description}
                                      \item[(i)] the maximal ideal space of $\overline{A\big|_{Y}}$ is $Y$, and
                                       \item[(ii)] $\overline{A\big|_{Y}}$ has the countable approximation property.
\end{description}
\end{lemma}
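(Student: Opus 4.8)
The plan is to prove (ii) first, since it is self-contained and feeds into (i), and then to deduce (i) by reducing it to the statement that $Y$ is $A$-convex.

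For (ii) I would argue directly. Fix $g \in C(Y)$ and, by the Tietze extension theorem, choose $f \in C(X)$ with $f|_Y = g$. Since $A$ has the countable approximation property on $X$, there is a countable family $\{M_n\}$ of compact subsets of $X$ with $\bigcup_n M_n = X$ and $f|_{M_n} \in \overline{A|_{M_n}}$ for every $n$. Put $L_n = M_n \cap Y$; these are compact, cover $Y$, and satisfy $g|_{L_n} = f|_{L_n}$. Because restriction is norm-decreasing it carries $\overline{A|_{M_n}}$ into $\overline{A|_{L_n}}$, so $f|_{L_n} \in \overline{A|_{L_n}}$; and since $A|_Y \subseteq \overline{A|_Y}$ we have $A|_{L_n} \subseteq (\overline{A|_Y})|_{L_n}$, hence $\overline{A|_{L_n}} \subseteq \overline{(\overline{A|_Y})|_{L_n}}$. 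Therefore $g|_{L_n} \in \overline{(\overline{A|_Y})|_{L_n}}$, and $\{L_n\}$ witnesses the countable approximation property for $\overline{A|_Y}$ on $Y$. (Note this step uses neither $\mathfrak{M}_A = X$ nor first countability.)

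For (i), write $B = \overline{A|_Y}$. The restriction $a \mapsto a|_Y$ is a unital homomorphism $A \to B$ with dense range, so it induces a continuous injection $\rho \colon \mathfrak{M}_B \to \mathfrak{M}_A = X$ which, by compactness, is a homeomorphism onto its image. A standard computation identifies $\rho(\mathfrak{M}_B)$ with the $A$-convex hull $\widehat{Y} = \{x \in X : |a(x)| \le \|a\|_Y \text{ for all } a \in A\}$, and shows that a homomorphism of $B$ lying over a point of $Y$ is evaluation at that point. Thus (i) is equivalent to the assertion that $\widehat{Y} = Y$; here the inclusion $Y \subseteq \widehat{Y}$ is immediate, so the whole content is $\widehat{Y} \subseteq Y$. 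This is precisely the analogue of Lemma~2.3 of \cite{anderson_izzo_smooth_manifolds}, where under the peak-point hypothesis one argues: if $x_0 \in \widehat{Y} \setminus Y$ and $a$ peaks at $x_0$, then $\|a\|_Y < 1 = a(x_0)$ contradicts $x_0 \in \widehat{Y}$. The task is to run an analogous separation using the countable approximation property in place of peak functions.

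The route I would take is to suppose $x_0 \in \widehat{Y} \setminus Y$, take a representing measure $\mu$ for $x_0$ carried by $Y$ (which exists exactly because $x_0 \in \widehat{Y}$), and reduce to $Y = \operatorname{supp}(\mu)$, setting $Z = Y \cup \{x_0\}$. By part (ii) the algebra $\overline{A|_Z}$ has the countable approximation property on $Z$, so Lemma~\ref{uniform lemma} produces a compact $N \subseteq Z$ with nonempty relative interior on which $\overline{A|_N} = C(N)$; every point of that interior is a peak point for the local algebra. The plan is then to promote these local peak points to genuine peak points of $\overline{A|_Z}$ via the local maximum modulus principle, and to contradict the representation of $x_0$ by $\mu$, using that a representing measure gives zero mass to peak points other than the represented point. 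The main obstacle is exactly this promotion: the local maximum modulus principle lives on $\mathfrak{M}_{\overline{A|_Z}}$, which is a priori the hull $\widehat{Z}$ and may strictly exceed $Z$, so the very object we are trying to pin down reappears; moreover, the point-by-point vanishing on peak points must be upgraded to vanishing of $\mu$ on a whole relatively open set, and for a nonlocal algebra an annihilating measure need not vanish on a set of local density. Circumventing this — by threading Baire category (so that the set $N$ furnished by Lemma~\ref{uniform lemma} can be forced to meet $\operatorname{supp}(\mu)$ in positive measure, or to surround $x_0$) together with Rossi's principle to transfer the constraint from $Y$ to the extra hull points — is where the genuine work lies; the remaining items (first countability and separability of the subspaces $Y$ and $Z$, compactness of the hulls, density of the restricted algebras) are routine.
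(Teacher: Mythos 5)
Your part (ii) is correct and is exactly the paper's argument (Tietze extension, intersect the sets $M_n$ with $Y$), and your reduction of (i) to the $A$-convexity statement $\widehat{Y}=Y$ via Gamelin's identification $\mathfrak{M}_{\overline{A|_Y}}=\widehat{Y}$ is also the right first move. But the core of (i) — ruling out a point of $\widehat{Y}\setminus Y$ — is left genuinely unfinished: you propose representing measures, the set $Z=Y\cup\{x_0\}$, and a promotion of local peak points to global ones, and then you correctly identify two obstacles (the apparent circularity that $\mathfrak{M}_{\overline{A|_Z}}$ is itself a hull, and the need to upgrade pointwise vanishing of $\mu$ on peak points to vanishing on an open set) without resolving either. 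As written, the proposal does not prove (i); it proves (ii), reduces (i) to the hull statement, and then sketches a program whose hardest step is flagged as open.

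The paper's proof closes exactly this gap, and the two choices that make it work are worth noting. First, it applies the countable approximation property not to $Y$ or to $Y\cup\{x_0\}$ but to the closed set $\overline{U}$ where $U=\widehat{Y}\setminus Y$: part (ii) plus Lemma~\ref{uniform lemma} yield a compact $N\subseteq\overline{U}$ with $\overline{A|_N}=C(N)$ whose interior $W$ (in $U$, hence in $\widehat{Y}$, since $U$ is open in $\widehat{Y}$) sits entirely inside the extra hull. Second, there is no circularity in applying Rossi's local peak set theorem on $\widehat{Y}$, because the identification $\mathfrak{M}_{\overline{A|_Y}}=\widehat{Y}$ is unconditional — one does not need to know $\widehat{Y}=Y$ to use it; the contradiction is extracted afterwards. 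Concretely: take a continuous $f$ with $f=1$ at a point of $W$ and $f=0$ off $W$, approximate it on $N$ by $g\in A$ with $\|f-g\|_N<1/4$, normalize to get $h\in A$ with $\|h\|_W=1$, $h(p)=1$ for some $p$ in a compact $L\subseteq W$, and $|h|<1/2$ on $W\setminus L$; then $(1+h)/2$ peaks, relative to the open subset $W$ of $\mathfrak{M}_{\overline{A|_Y}}$, on a compact $E\subseteq W$. Rossi's theorem makes $E$ a genuine peak set for $\overline{A|_Y}$, which is absurd because a peaking function attains its norm on $Y$ (by definition of the hull, $\|\cdot\|_Y=\|\cdot\|_{\widehat{Y}}$ on $A$), yet $E\subseteq\widehat{Y}\setminus Y$. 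This construction uses no representing measures and never needs the measure-theoretic upgrade you were worried about; the entire difficulty you identified dissolves once the dense-approximation patch is placed inside $\widehat{Y}\setminus Y$ itself rather than inside $Y$.
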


\begin{proof}
We prove (ii) first.
Given $f \in C(Y)$, choose $g \in C(X)$ with $g\big|_{Y} = f$.  Then there exists a countable family $\{M_{n}\}$ of compact subsets of $X$ with $\cup_{n=1}^{\infty} M_{n} = X$ and $g\big|_{M_{n}} \in \overline{A\big|_{M_{n}}}$.  The countable family  $M_{n}^{\prime} = M_{n} \cap Y$ of compact subsets of $Y$ clearly satisfies $\cup_{n=1}^{\infty} M_{n}^{\prime} = Y$ and $f\big|_{M_{n}^\prime} = g\big|_{M_{n}^\prime} \in \overline{A\big|_{M_{n}^{\prime}}}$.

We now prove (i). The maximal ideal space of $\ov{A|_Y}$ is the $A$-convex hull $\widehat Y$ of $Y$
\cite[Theorem~II.6.1]{Ga} defined by
  \[ \widehat{Y} = \{ p \in X: |f(p)| \leq \|f\|_{Y} \mbox{ for all }f \in A \}. \]
  Assume to get a contradiction that $\widehat Y$ is strictly larger than $Y$.  Let $U={\widehat Y\setminus Y}$.
By (ii), $\ov{A|_{\ov U}}$ satisfies the countable approximation property.  Hence by Lemma~\ref{uniform lemma}, there exists a compact subset $N$ of $\ov U$ with nonempty interior in $\ov U$ such that $\ov{A|_N}=C(N)$.  Note that then $N$ also has nonempty interior in $U$.  Denote this interior by $W$, and note that $W$ is an open set of $\widehat Y$ contained in $U$.    Now choose a continuous function $f:X\rightarrow [0,1]$ such that $f$ takes the value 1 at some point of $W$ and $f$ is identically zero on $X\setminus W$. Since
$\ov{A|_N}=C(N)$, there is a function $g$ in $A$ such that $\|f-g\|_N< 1/4$.
Then $\|g\|_W> 3/4$, and there is a compact set $L$ of $W$ such that $|g|<1/4$ on $W\setminus L$.  By multiplying $g$ by a suitable constant, we obtain a function $h$ in $A$ with $\|h\|_W=1$, with $h(p)=1$ for some point $p$ of $L$, and with $|h|<1/2$ on $W\setminus L$.  Then the function $(1+h)/2$ is in $A$ and, restricted to the open set $W$ of $\widehat Y$, peaks on a compact subset $E$ of $W$.  (It is necessary to take $(1+h)/2$ as the function $h$ itself may take values of modulus 1 other than the number 1.)  Now $E$ is a local peak set for $\ov{A|_Y}$, and hence, by Rossi's local peak set theorem \cite[Theorem~III.8.1]{Ga}, $E$ is a peak set for $\ov{A|_Y}$.  But this is a contradiction since $E$ is contained in $\widehat Y\setminus Y$.
\end{proof}

Given the result of Lemma~\ref{inheritance lemma}, the following will ensure that in our context the support of an annihilating measure has no isolated points.

\begin{lemma}\label{no isolated support} Let $A$ be a uniform algebra on a compact space $X$ and let $\mu$ be an annihilating measure for $A$.  Suppose that $A$ has the property that for each compact subset $Y$ of $X$ the maximal ideal space of $\overline{A\big|_{Y}}$ is $Y$.  Then the support of $\mu$ has no isolated points. \end{lemma}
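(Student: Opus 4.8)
The plan is to argue by contradiction, peaking the putative isolated mass point off the rest of the support. Suppose $x_{0}$ is an isolated point of $S := \mbox{supp}(\mu)$. Then there is an open neighborhood $U$ of $x_{0}$ in $X$ with $U \cap S = \{x_{0}\}$. Since $\mu$ is carried by $S$, its restriction $\mu|_{U}$ is concentrated on $\{x_{0}\}$, so $\mu|_{U} = c\,\delta_{x_{0}}$ with $c = \mu(\{x_{0}\})$; because $x_{0} \in S$, necessarily $c \neq 0$. I would then set $E := S \setminus \{x_{0}\}$, a compact subset of $X$ not containing $x_{0}$, and decompose $\mu = c\,\delta_{x_{0}} + \nu$ with $\nu := \mu - c\,\delta_{x_{0}}$. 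A short check shows $\nu(\{x_{0}\}) = 0$ and that $\nu$ agrees with $\mu$ off $x_{0}$, whence $\mbox{supp}(\nu) \subseteq E$; in particular $\nu$ is a finite measure carried by $E$.

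The heart of the argument is to produce functions in $A$ that separate $x_{0}$ from $E$ as sharply as desired. Applying the hypothesis with $Y = E$ (and with $Y = X$, which gives $\mathfrak{M}_{A} = X$ so that hulls are formed inside $X$), the maximal ideal space of $\overline{A|_{E}}$ is $E$. Since this maximal ideal space coincides with the $A$-convex hull
\[ \widehat{E} = \{\, p \in X : |f(p)| \leq \|f\|_{E} \mbox{ for all } f \in A \,\} \]
by Theorem~II.6.1 of \cite{Ga} (exactly as used in the proof of Lemma~\ref{inheritance lemma}), we get $\widehat{E} = E$, and hence $x_{0} \notin \widehat{E}$. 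By the definition of the hull there is some $f \in A$ with $|f(x_{0})| > \|f\|_{E}$. Passing to a high power and normalizing the value at $x_{0}$ to be $1$, i.e. setting $g = \bigl(f/f(x_{0})\bigr)^{k}$, gives $g(x_{0}) = 1$ and $\|g\|_{E} = \bigl(\|f\|_{E}/|f(x_{0})|\bigr)^{k} \to 0$ as $k \to \infty$. Thus for every $\epsilon > 0$ there is $g \in A$ with $g(x_{0}) = 1$ and $\|g\|_{E} < \epsilon$.

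Finally I would exploit $\mu \in A^{\perp}$. For such a $g$,
\[ 0 = \int g \, d\mu = c\,g(x_{0}) + \int_{E} g \, d\nu = c + \int_{E} g \, d\nu, \]
while $\bigl|\int_{E} g \, d\nu\bigr| \leq \|g\|_{E}\,\|\nu\| < \epsilon\,\|\nu\|$. Choosing $\epsilon < |c|/(\|\nu\| + 1)$ forces $|c| = \bigl|\int_{E} g \, d\nu\bigr| < |c|$, a contradiction. Hence $S$ has no isolated point.

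I expect the only mildly delicate points to be the measure-theoretic bookkeeping of the first step — verifying $c \neq 0$ and $\mbox{supp}(\nu) \subseteq E$ — together with the observation (already implicit in the paper's discussion of approximate regularity) that the stated hypothesis on maximal ideal spaces of restrictions is precisely what yields the separating functions via the $A$-convex hull. Neither is a serious obstacle: the mechanism is simply that the hypothesis makes $A$ approximately regular, so an isolated mass point of an annihilating measure can be peaked off, contradicting annihilation.
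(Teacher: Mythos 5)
Your proof is correct, but it takes a genuinely different route from the paper's. The paper applies the hypothesis to $Y=\mathrm{supp}(\mu)$ itself: since $p$ is isolated, $\{p\}$ is a clopen subset of $Y=\mathfrak{M}_{\overline{A|_{Y}}}$, so the Shilov idempotent theorem \cite[Theorem~III.6.5]{Ga} produces $f\in\overline{A|_{Y}}$ with $f(p)=1$ and $f\equiv 0$ on $Y\setminus\{p\}$; approximating $f$ uniformly on $Y$ by elements of $A$ and integrating against $\mu$ gives $\mu(\{p\})=0$ exactly, contradicting that $p$ lies in the support. You instead apply the hypothesis to $E=\mathrm{supp}(\mu)\setminus\{x_{0}\}$ (together with $Y=X$ to pin down $\mathfrak{M}_{A}=X$), identify $\mathfrak{M}_{\overline{A|_{E}}}$ with the $A$-convex hull via \cite[Theorem~II.6.1]{Ga} --- where, as you implicitly use and as in the proof of Lemma~\ref{inheritance lemma}, point separation by $A$ is what converts ``$\mathfrak{M}_{\overline{A|_{E}}}=E$'' into ``$\widehat{E}=E$'' --- and then peak $x_{0}$ off $E$ approximately by taking powers, deriving a quantitative contradiction with $\mu\in A^{\perp}$. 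What each buys: your argument avoids the Shilov idempotent theorem (a comparatively deep result) in favor of the elementary hull description, and it makes explicit that the hypothesis amounts to approximate regularity; the paper's argument is shorter, needs no measure decomposition or $\epsilon$-bookkeeping, and yields the exact conclusion $\mu(\{p\})=0$ in one stroke. Two minor points in yours, neither a gap: the degenerate case $E=\emptyset$ should be noted (it is handled by $g\equiv 1$, or formally by your hull argument since $\widehat{\emptyset}=\emptyset$), and the openness of $U\setminus\{x_{0}\}$ used to get $c\neq 0$ relies on the standing Hausdorff assumption, which the paper does make.
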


\begin{proof} Let $Y$ be the support of $\mu \in A^{\perp}$.  Assume by contradiction that $Y$ has an isolated point $p$.  By the Shilov idempotent theorem \cite[Theorem~III.6.5]{Ga} applied to the algebra $\overline{A\big|_{Y}}$, there is a function $f \in \overline{A\big|_{Y}}$ such that $f(p) = 1$ and $f(Y\setminus \{p\}) = 0$.  Choose a sequence $f_{n} \in A$ with $f_{n}$ converging uniformly on $Y$ to $f$.  Then for each $n$,
\[ 0 = \int_{X} f_{n} \,d\mu = \int_{Y} f_{n} \,d\mu \]
implying that
\[ 0 = \int_{Y} f \,d\mu = \mu(\{p\}), \]
contradicting the assumption that $p$ is an isolated point in the support of $\mu$.  \end{proof}

\begin{lemma}\label{exceptional set empty interior} Let $M$ be a manifold-with-boundary of class $C^{1}$.  Suppose $K$ is a compact subset of $M$, and $A$ is a uniform algebra on $K$ generated by a collection $\Phi$ of functions that are $C^{1}$ in a neighborhood of $K$.  Suppose that $A$ has the countable approximation property.  Then the set
\[ E := \{ p \in K: df_{1} \wedge \cdots \wedge df_{m}(p) = 0 \mbox{ for all } f_{1}, \ldots ,f_{m} \in \Phi \} \]
has empty interior in $M$. \end{lemma}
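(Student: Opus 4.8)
The plan is to argue by contradiction: assume $E$ has nonempty interior and violate the countable approximation property. Write $m=\dim M$. Since each condition $df_{1}\wedge\cdots\wedge df_{m}(p)=0$ is closed, $E$ is closed in $K$, so if it has nonempty interior it contains a nonempty set $W$ open in $M$, which we take small enough that $\overline{W}\subset K$ (hence $\overline{W}\subset E$). The countable approximation property passes to the restricted algebra $\overline{A|_{\overline{W}}}$ exactly as in the proof of Lemma~\ref{inheritance lemma}(ii), an argument that uses only the countable approximation property. Applying Lemma~\ref{uniform lemma} to $\overline{A|_{\overline{W}}}$ on the compact metrizable space $\overline{W}$ yields a compact $N\subset\overline{W}$ with nonempty interior in $\overline{W}$ and $\overline{A|_{N}}=C(N)$. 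Since $W$ is open and dense in $\overline{W}$, that interior meets $W$ in a set open in $M$, so $N$ has nonempty interior in $M$. Choosing a nonempty open $U\subset M$ with $\overline{U}\subset\mathrm{int}_{M}(N)$, we have $U\subset E$ and, by extending any $h\in C(\overline{U})$ via Tietze to $N$ and approximating there by $A$, also $\overline{A|_{\overline{U}}}=C(\overline{U})$.

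Next I would localize the differential condition. At each $p\in U$ the complex covectors $\{df(p):f\in\Phi\}$ span a subspace $V_{p}\subset T_{p}^{*}M\otimes\mathbb{C}$ of complex dimension $\le m-1$ (this is exactly the vanishing of all $m$-fold wedges). Because $\dim_{\mathbb{C}}V_{p}$ is lower semicontinuous it is constant, equal to its maximum $k\le m-1$, on some nonempty open $U'\subset U$; pick $f_{1},\dots,f_{k}\in\Phi$ with $df_{1},\dots,df_{k}$ independent, hence (by maximality) spanning $V_{p}$ for every $p\in U'$, and set $F=(f_{1},\dots,f_{k})\colon U'\to\mathbb{C}^{k}$. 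Shrinking $U'$ so that the real rank $\rho$ of $dF$ is also constant and $\overline{U'}\subset U$, we retain $\overline{A|_{\overline{U'}}}=C(\overline{U'})$. By the chain rule every polynomial in finitely many generators has differential in $\mathrm{span}_{\mathbb{C}}\{df_{1},\dots,df_{k}\}$; since each $df_{j}$ annihilates the real kernel of $dF$, the constant-rank theorem shows every generator, and hence every element of $\overline{A|_{\overline{U'}}}$, is locally constant along the fibers of $F$.

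Now split on $\rho$. If $\rho<m$, the fibers of $F$ are positive-dimensional, so within a single constant-rank chart there are distinct $x,y$ with $F(x)=F(y)$; every element of $\overline{A|_{\overline{U'}}}$ takes equal values at $x$ and $y$, contradicting $\overline{A|_{\overline{U'}}}=C(\overline{U'})$, which separates points. If $\rho=m$, then $F$ is an immersion, and after shrinking $U'$ it is an embedding onto a $C^{1}$ real submanifold $S=F(\overline{U'})\subset\mathbb{C}^{k}$ of real dimension $m>k$. A dimension count gives $\dim_{\mathbb{C}}(T_{x}S\cap iT_{x}S)\ge m-k\ge 1$, so $S$ has positive CR dimension (shrinking once more, $S$ is a CR submanifold). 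Writing $f=\hat f\circ F$, the relation $df\in\mathrm{span}_{\mathbb{C}}\{df_{j}\}$ forces $d\hat f$ to have no antiholomorphic part along $TS$, i.e.\ $\bar\partial_{b}\hat f=0$: each generator pushes forward to a CR function on $S$. As CR functions are closed under sums, products, and uniform limits, the pushforward of $\overline{A|_{\overline{U'}}}$ consists of continuous CR functions; but $F$ is a homeomorphism onto $S$, so this pushforward is all of $C(S)$, while on a manifold of positive CR dimension the continuous CR functions form a proper subalgebra of $C(S)$ — the final contradiction.

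The main obstacle is this last ($\rho=m$) case: making rigorous, in the merely $C^{1}$ category, both that the generators push forward to (weakly) CR functions and that the continuous CR functions on a manifold of positive CR dimension are a proper uniformly closed subalgebra of $C(S)$. Once the positive CR dimension is extracted from $k\le m-1$, the obstruction is precisely the tangential Cauchy–Riemann equations, and I would isolate this as a separate lemma (or invoke the appropriate CR-approximation result) rather than reproving it inline.
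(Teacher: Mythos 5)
Your reduction steps are fine and partly parallel the paper's own proof: restricting to a compact set $N$ with $\overline{A|_{N}}=C(N)$ via Lemmas~\ref{uniform lemma} and~\ref{inheritance lemma} is exactly how the paper begins, and your constant-rank dichotomy is sound in the $\rho<m$ case (functions constant on a positive-dimensional fiber cannot generate a dense subalgebra of the continuous functions). But the $\rho=m$ case, which is the generic and essential one, rests on an unproved claim: that on a $C^{1}$ CR submanifold $S$ of positive CR dimension the continuous CR functions form a proper, uniformly closed subalgebra of $C(S)$. You flag this yourself, but it is not a technicality to be outsourced --- it is the entire content of the lemma. In the merely $C^{1}$ category the complex tangent bundle $TS\cap iTS$ has only continuous coefficients, so the tangential operator $\bar\partial_{b}$ cannot be applied in the distributional sense (integration by parts requires differentiable coefficients), pointwise CR-ness of the $C^{1}$ pushforwards does not survive uniform limits, and no standard CR-approximation theorem (e.g.\ Baouendi--Tr\`eves) applies at this regularity. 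So both halves of your final sentence --- closure of CR functions under uniform limits, and properness in $C(S)$ --- are unsubstantiated exactly where they are needed. Indeed, the paper's Section~\ref{prelims} presents your CR picture verbatim \emph{as heuristics} and then deliberately avoids it.

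What the paper does instead, and what you would need to do, is replace the CR step by an explicit duality argument via Stokes' theorem. The paper argues by induction on $k$: given $\omega=df_{1}\wedge\cdots\wedge df_{k}$ not identically zero, it chooses a small $(k+1)$-ball $B$ with $\overline{B}\subset N$ positioned so that $\omega$ is nonvanishing as a form on $\partial B$, picks a smooth $g$ with $\int_{\partial B}g\,\omega\neq0$, and approximates $g$ uniformly on $\partial B$ by a polynomial $h$ in the generators (possible since $\overline{A|_{N}}=C(N)$); Stokes then gives $\int_{B}dh\wedge\omega=\int_{\partial B}h\,\omega\neq0$, producing a generator $f_{k+1}$ with $df_{1}\wedge\cdots\wedge df_{k+1}\not\equiv0$. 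Contrapositively: if all $(k+1)$-fold wedges vanished, the functional $g\mapsto\int_{\partial B}g\,\omega$ would be a nonzero bounded functional annihilating $A|_{\partial B}$ but not $C(\partial B)$. This integral functional is precisely the rigorous, $C^{1}$-compatible substitute for your ``continuous CR functions are proper'' claim, and it makes the constant-rank normal form and the CR machinery unnecessary. As written, your proof has a genuine gap at its concluding step; to close it you should either prove the CR properness statement in the $C^{1}$ category (hard, and essentially equivalent to the lemma) or adopt the Stokes/annihilating-functional route.
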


\begin{proof} It suffices to show that given any open set $U$ of $M$ contained in $K$, there are functions $f_{1}, \ldots, f_{m}$ in $\Phi$ such that $df_{1} \wedge \cdots \wedge df_{m}$ is not identically zero on $U$.  We will choose these functions inductively so that for each $k$,  $1 \leq k \leq m$, $df_{1} \wedge \cdots \wedge df_{k}$ is not identically zero on $U$.   For $k = 1$, the assumption that functions in $\Phi$ separate the points of $K$ implies that there exists $f_{1} \in \Phi$ with $df_{1}$ not identically zero on $U$.    Now assume that $f_{1}, \ldots, f_{k}$ have been chosen so that $\omega := df_{1} \wedge \cdots \wedge df_{k}$ is not identically zero on $U$.  Let $W = \{ p \in U: \omega(p) \neq 0 \}$.  Set $\mathfrak{A} = \overline{A\big|_{\overline{W}}}$. By Lemma~\ref{inheritance lemma}, $\mathfrak{A}$ has the countable approximation property, so by Lemma~\ref{uniform lemma}, there exists a compact subset $N$ of $\overline{W}$ with non-empty interior in $\overline{W}$ such that $\overline{\mathfrak{A}\big|_{N}} = C(N)$; without loss of generality we may assume $N \subset W$. Fix $p \in N$.  We may choose $k$ vectors $\xi_{1}, \ldots ,\xi_{k}$ with $\omega(p)(\xi_{1}, \ldots ,\xi_{k}) \neq 0$.  We may then choose a set $B$ diffeomorphic to an open $(k+1)$-ball with $\overline{B} \subset N$ and $p \in \partial B$
such that $\xi_{1}, \ldots ,\xi_{k}$ span the tangent space to $\partial B$ at $p$.  Since $\omega$ does not vanish identically as a form on $\partial B$, there exists a smooth function $g$ on $\partial B$ with
\begin{equation} \int_{\partial B} g \omega \neq 0. \label{eq:nonzero} \end{equation}
Choose $G \in C(N)$ with $G = g$ on $\partial B$.  Then $G$ is a uniform limit on $N$ of elements of $\mathfrak{A}$, hence $g$ is a uniform limit of elements of $A$ on $\partial B$, and therefore is a uniform limit on $\partial B$ of polynomials in elements of $\Phi$.  It follows that there is a polynomial $h$ in elements of $\Phi$ such that (\ref{eq:nonzero}) holds with $g$ replaced by $h$.  By Stokes' Theorem,
\begin{equation} \int_{B} dh \wedge \omega = \int_{\partial B} h \omega \neq 0. \label{eq:nonzerotwo} \end{equation}
From the formula for the differential of a product, we see that $dh$ is a linear combination (with coefficients that are smooth functions) of differentials of functions in $\Phi$.  Thus (\ref{eq:nonzerotwo}) implies the existence of a function $f_{k+1}$ such that $\omega \wedge df_{k+1} = df_{1} \wedge \cdots \wedge df_{k+1}$ is not identically zero on $B$.  This completes the induction and the proof. \end{proof}

Although it will not be needed in the sequel, we remark that Theorem~\ref{approximation_theorem} and Lemma~\ref{exceptional set empty interior} imply a result about the essential set of certain algebras, analogous to Theorem~1.2 of \cite{anderson_izzo_smooth_manifolds}. Recall that the \emph{essential set} $E$ for a uniform algebra on a space $X$ is the smallest closed subset of $X$ such that $A$ contains every continuous function on $X$ that vanishes on $E$.  Note that if $F$ is any closed set such that
\[ A = \{g \in C(X): g\big|_{F} \in A\big|_{F} \} \]
then the essential set $E$ for $A$ is contained in $F$. Combining Theorem~\ref{approximation_theorem} and Lemma~\ref{exceptional set empty interior} we immediately obtain

\begin{theorem} Let $A$ be a uniform algebra on a compact manifold with boundary of class $C^1$.  Assume that $A$ is generated by $C^{1}$ functions, that the maximal ideal space of $A$ is $M$, and that $A$ has the countable approximation property.  Then the essential set for $A$ has empty interior in $M$. \end{theorem}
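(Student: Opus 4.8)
The plan is to derive the conclusion directly by combining the two results the excerpt has just established. The claim asserts that for a uniform algebra $A$ on a compact $C^1$ manifold-with-boundary $M$, generated by $C^1$ functions, with $\mathfrak{M}_A = M$ and satisfying the countable approximation property, the essential set $E$ has empty interior in $M$. My strategy is to first produce a closed set $F$ with empty interior for which $A = \{g \in C(M): g|_F \in A|_F\}$, and then invoke the elementary remark preceding the theorem that $E \subseteq F$, so that $E$ too has empty interior.

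First I would identify the candidate for $F$. Lemma~\ref{exceptional set empty interior} applies verbatim here: taking $\Phi$ to be the collection of $C^1$ generators of $A$, the hypotheses (compact $K = M$, $C^1$ generators, countable approximation property) are exactly those of the lemma, so the exceptional set
\[ E' := \{ p \in M : df_1 \wedge \cdots \wedge df_m(p) = 0 \text{ for all } f_1, \ldots, f_m \in \Phi \} \]
has empty interior in $M$. This $E'$ is precisely the set on which condition (1) of Theorem~\ref{approximation_theorem} fails, so I would set $F = E'$ (one should check $E'$ is closed, which follows since the wedge condition is a closed condition on the $C^1$ differentials).

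Next I would verify that Theorem~\ref{approximation_theorem} applies with $X = M$, $m = \dim M$, and $E = E'$. The open manifold $M \setminus E'$ is an $m$-dimensional $C^1$ manifold; at each point $p \in M \setminus E'$ there are generators $f_1, \ldots, f_m \in \Phi \subseteq A$, which are $C^1$ near $K$ hence $C^1$ on $M \setminus E'$, with $df_1 \wedge \cdots \wedge df_m(p) \neq 0$ by the very definition of $E'$, giving condition (1). Condition (2) holds because $\Phi$ generates $A$ and separates points of $M$, and its members are $C^1$. Theorem~\ref{approximation_theorem} then yields $A = \{g \in C(M): g|_{E'} \in A|_{E'}\}$, so by the remark the essential set satisfies $E \subseteq E'$, which has empty interior.

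**The main obstacle** I expect is a subtle point rather than a hard estimate: confirming that the manifold-with-boundary setting (as opposed to a manifold without boundary) does not disturb the application of Theorem~\ref{approximation_theorem}, whose statement is phrased for $X \setminus E$ being an $m$-manifold. One must be a little careful that "empty interior in $M$" for a manifold-with-boundary is the relevant notion and that the local embedding/approximation argument of Theorem~\ref{approximation_theorem} tolerates boundary points of $M$; since boundary points lie in a lower-dimensional set one can, if necessary, absorb $\partial M$ into the exceptional set $E'$ without enlarging its interior. Everything else is a routine verification of hypotheses, so the theorem follows immediately from the two cited results.
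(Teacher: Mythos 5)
Your proof is correct and takes essentially the same route as the paper, which obtains this theorem precisely by combining Theorem~\ref{approximation_theorem} with Lemma~\ref{exceptional set empty interior} (via the remark that the essential set is contained in any closed $F$ with $A = \{g \in C(M): g\big|_{F} \in A\big|_{F}\}$), stating the conclusion as immediate. Your additional care with $\partial M$ --- absorbing it into the exceptional set, which remains closed with empty interior since a finite union of closed nowhere dense sets is nowhere dense --- fills in a detail the paper leaves implicit without changing the argument.
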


Next we recall some standard facts about varieties which will be used in the proof of Theorem~\ref{support_theorem}.

Let $\Sigma$ be a real-analytic variety in the open set $W \subset \mathbb{R}^n$, that is, for each point $p \in W$ there is a finite set $\mathcal{F}$ of functions real-analytic in a neighborhood $N$ of $p$ in $W$ with $\Sigma $ the common zero set of $\mathcal{F}$ in $N$. The following definitions and facts are standard; see for example \cite{narasimhan}. We let $\Sigma_{\mbox{\tiny reg}}$ denote the set of points $p \in \Sigma$ for which there exists a neighborhood $N$ of $p$ in $\mathbb{R}^n$ such that $\Sigma \cap N$ is a regularly
imbedded real-analytic submanifold of $N$ of some dimension $d := d(p)$.   This dimension $d(p)$ is locally constant on $\Sigma_{\mbox{\tiny reg}}$.  The dimension of $\Sigma,$ denoted by $\mbox{dim}(\Sigma)$, is defined to be the largest such $d(p)$ as $p$ ranges over the regular points of $\Sigma$.  The singular set of $\Sigma$, denoted by $\Sigma _{\mbox{\tiny sing}}$, is the complement in $\Sigma$ of $\Sigma_{\mbox{\tiny reg}}$.  If $\Sigma^{\prime} \subset \Sigma$ is a real-analytic subvariety of some open set $W^{\prime} \subset W$ and $\Sigma^{\prime}$ has empty interior relative to $\Sigma$, then $\mbox{dim}(\Sigma^{\prime}) < \mbox{dim}(\Sigma)$.  Both $\Sigma$ and $\Sigma_{\mbox{\tiny sing}}$ are closed in $W$.  Although $\Sigma_{\mbox{\tiny sing}}$ may not itself be a subvariety of $W$, it is locally contained in a proper subvariety of $\Sigma$: for each $p \in \Sigma _{\mbox{\tiny sing}}$, there is a real-analytic subvariety $Y$ of an open neighborhood $N$ of $p$ such that $\Sigma_{\mbox{\tiny sing}} \cap N \subset Y$  and $\mbox{dim}(Y) < \mbox{dim}(\Sigma)$.

If $M$ is an $m$-dimensional manifold of class $C^1$ and
$\Phi$ is a collection
of functions that are $C^{1}$ on
$M$, we define the {\em exceptional set of $M$ relative to $\Phi$}  by
\begin{equation} M_{\Phi} = \{ p \in M : df_{1} \wedge \ldots \wedge df_{m}(p) = 0 \mbox{ for all }m \mbox{-tuples } (f_{1}, \ldots , f_{m}) \in  \Phi^m \}. \label{eq:exceptional} \end{equation}
If $\Sigma$ is a real-analytic variety and $\Phi$ is a collection of functions real-analytic on $\Sigma$ (i.e., each function in $\Phi$ extends to be real-analytic in a neighborhood (depending on the function) of $\Sigma$), then $\Sigma_{\Phi}$ is defined to be the set of all points $p \in \Sigma_{\mbox{\tiny reg}}$ such that in a neighborhood of $p$, the set  $\Sigma_{\mbox{\tiny reg}}$ is an $m$-dimensional manifold $M$, and $p \in M_{\Phi}$ as defined in (\ref{eq:exceptional}).

Finally, we have:

\begin{lemma}[\cite{anderson_izzo_varieties}, Lemma~2.1] \label{exceptional_set_variety} Let $\Sigma$ be a real-analytic variety in an open set $W \subset \mathbb{R}^n$, and $\Phi$ a collection of functions real-analytic on $\Sigma$.  Then $\Sigma_{\Phi}$ is a subvariety of $W \setminus \Sigma _{\mbox{\rm\tiny sing}}$. \end{lemma}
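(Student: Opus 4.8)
The plan is to reduce to a neighborhood of a single point of the open set $W \setminus \Sigma_{\mbox{\tiny sing}}$ and to exhibit $\Sigma_{\Phi}$ there as the common zero locus of finitely many real-analytic functions. Fix $p_{0} \in W \setminus \Sigma_{\mbox{\tiny sing}}$. If $p_{0} \notin \Sigma$, then since $\Sigma$ is closed there is a neighborhood of $p_{0}$ meeting neither $\Sigma$ nor $\Sigma_{\Phi}$, and the empty set is trivially a subvariety; so assume $p_{0} \in \Sigma_{\mbox{\tiny reg}}$. Near $p_{0}$ the set $\Sigma_{\mbox{\tiny reg}}$ is a regularly imbedded real-analytic submanifold $M$ of some fixed dimension $m$ (the local dimension is locally constant), and after a real-analytic change of coordinates in $\mathbb{R}^{n}$ we may assume $M = \{ x_{m+1} = \cdots = x_{n} = 0 \}$, with $x_{1}, \ldots, x_{m}$ serving as real-analytic coordinates on $M$.

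Next, for each $m$-tuple $\alpha = (f_{1}, \ldots, f_{m}) \in \Phi^{m}$ I would form the Jacobian determinant $J_{\alpha} = \det(\partial f_{i}/\partial x_{j})_{1 \leq i,j \leq m}$, computed along $M$. Each $f_{i}$ is real-analytic in a (function-dependent) neighborhood of $\Sigma$, hence in particular defines a real-analytic germ at $p_{0}$, so $J_{\alpha}$ is a real-analytic germ at $p_{0}$; and in the chosen coordinates $df_{1} \wedge \cdots \wedge df_{m} = J_{\alpha}\, dx_{1} \wedge \cdots \wedge dx_{m}$ along $M$. Thus, in a neighborhood of $p_{0}$, $\Sigma_{\Phi} \cap M = \bigcap_{\alpha \in \Phi^{m}} \{ J_{\alpha} = 0 \}$, the common zero set of the family $\{ J_{\alpha} \}$ of real-analytic functions.

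The crux is to replace this a priori infinite family by a finite subfamily without changing the common zero set on a full neighborhood. Here I would invoke that the ring $\mathcal{O}_{M, p_{0}}$ of germs of real-analytic functions at $p_{0}$ is Noetherian (it is the ring of convergent power series in $m$ variables): the ideal generated by the germs $(J_{\alpha})_{p_{0}}$ is finitely generated, and since it is generated by the $J_{\alpha}$ it is generated by finitely many of them, say $J_{\alpha_{1}}, \ldots, J_{\alpha_{N}}$. To turn this germ-level statement into an equality of zero sets on a single fixed neighborhood --- the genuinely delicate point --- I would use the coherence of the sheaf of germs of real-analytic functions (see \cite{narasimhan}): coherence provides a neighborhood $V$ of $p_{0}$ on which $J_{\alpha_{1}}, \ldots, J_{\alpha_{N}}$ generate the stalk of the ideal sheaf at \emph{every} point $q \in V$, so that at each such $q$ one may write $J_{\alpha} = \sum_{i} h_{i} J_{\alpha_{i}}$ in a neighborhood of $q$ for every $\alpha$. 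Consequently $\{ J_{\alpha_{1}} = \cdots = J_{\alpha_{N}} = 0 \} \cap V = \bigcap_{\alpha} \{ J_{\alpha} = 0 \} \cap V = \Sigma_{\Phi} \cap V$. This uniform-neighborhood step is exactly the place where the Noetherian property at the single point $p_{0}$ does not by itself suffice, and it is the main obstacle.

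Finally I would transfer back to the ambient space. Each $J_{\alpha_{i}}$, being real-analytic on $M = \{ x_{m+1} = \cdots = x_{n} = 0 \}$, extends to a real-analytic function $\widetilde{J}_{\alpha_{i}}$ on a neighborhood in $\mathbb{R}^{n}$ (in the straightened coordinates one simply regards $J_{\alpha_{i}}(x_{1}, \ldots, x_{m})$ as independent of $x_{m+1}, \ldots, x_{n}$). Together with the defining functions $x_{m+1}, \ldots, x_{n}$ of $M$, the functions $\widetilde{J}_{\alpha_{1}}, \ldots, \widetilde{J}_{\alpha_{N}}$ cut out $\Sigma_{\Phi}$ near $p_{0}$ as a subset of $W \setminus \Sigma_{\mbox{\tiny sing}}$. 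Since $p_{0}$ was an arbitrary point of the open set $W \setminus \Sigma_{\mbox{\tiny sing}}$, this shows $\Sigma_{\Phi}$ is a subvariety of $W \setminus \Sigma_{\mbox{\tiny sing}}$, as desired.
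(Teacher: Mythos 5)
Your proposal is correct and follows essentially the same route as the source of this lemma: the paper does not reprove it but quotes it as Lemma~2.1 of \cite{anderson_izzo_varieties}, whose argument is exactly your reduction to a point of the regular locus, local straightening of the manifold piece, reformulation of the condition $df_{1} \wedge \cdots \wedge df_{m}(p) = 0$ as the vanishing of the Jacobian determinants $J_{\alpha}$ (real-analytic functions defined on the whole straightened piece, since each member of $\Phi$ is real-analytic on a full neighborhood of $\Sigma$), and Noetherianity of the ring of real-analytic germs to replace the infinite family by finitely many defining functions. Your explicit coherence step --- producing a single neighborhood on which the finite subfamily $J_{\alpha_{1}}, \ldots, J_{\alpha_{N}}$ cuts out the common zero set of \emph{all} the $J_{\alpha}$ --- is a legitimate, and in fact more careful, handling of the germs-to-neighborhoods uniformity that the cited proof disposes of by appeal to standard facts about real-analytic sets (it is justified by the coherence of the sheaf of real-analytic germs together with the local stationarity of increasing sequences of coherent ideal subsheaves, available in the real-analytic category via complexification; see \cite{narasimhan}).
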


\section{Proof of Theorems 1.3 and 1.4} \label{proof}

We first indicate how Theorem~\ref{main_theorem} can be obtained from
Theorem~\ref{support_theorem}.  Let the variety $V$, the compact set $K \subset V$, and the algebra $A$ be as in Theorem~\ref{main_theorem}.  If $\mu \in A^{\perp}$, Theorem~\ref{support_theorem} implies that $\mbox{supp}(\mu) \subset \partial K$.  Apply Theorem~\ref{support_theorem} with $V$ replaced by
$\partial K$, $K$ replaced by $\partial K$, and $A$ replaced\vadjust{\kern 2pt} by $\mathfrak{A} = \ov{A\big|_{\partial K}}$.
Lemma~\ref{inheritance lemma} implies that the maximal ideal space of $\mathfrak{A}$ is
$\partial K$ and satisfies the countable approximation property. Note that ${\rm int}(\partial K)$ relative to $\partial K$ is $\partial K$. Therefore Theorem~\ref{support_theorem} implies that $\mbox{supp}(\mu) \cap \partial K = \emptyset$, and hence $\mbox{supp}(\mu) = \emptyset$, so $\mu \equiv 0$.  This establishes Theorem~\ref{main_theorem}.

We now turn to the proof of Theorem~\ref{support_theorem}, beginning with the general construction of
\cite{anderson_izzo_varieties}.  For convenience we repeat the details.

Let $\Sigma$ be a real-analytic variety in the open set $W \subset \mathbb{R}^n$, and let $\Phi$ be a collection of functions real-analytic on $\Sigma$.  We define inductively subsets $\Sigma_{k}$ of $\Sigma$ such that $\Sigma_{0} = \Sigma$, and for $k \geq 1$, $\Sigma_{k}$ is the real-analytic subvariety of
\[ W_{k} := W \setminus \bigcup_{j=0}^{k-1} \; (\Sigma_{j})_{\mbox{\tiny sing}} \]
defined by
\[ \Sigma_{k} = (\Sigma_{k-1})_{\Phi}. \]
Note that by definition, $\Sigma_{k} \subset (\Sigma_{k-1})_{\mbox{\tiny reg}}$.
We will refer to the varieties $\Sigma_{k}$ as the {\em E-filtration of $\Sigma$ in $W$ with respect to $\Phi$}, and to the sets $(\Sigma_{k})_{\mbox{\tiny sing}}$ as the {\em S-filtration of $\Sigma$ in $W$} (\emph{E} for exceptional, \emph{S} for singular).

\begin{lemma} \label{dimension_lemma} With $V,\Omega,K, A, \Phi$ as in Theorem~\ref{support_theorem}, suppose that $W$ is an open subset of $\Omega$  and $\Sigma \subset {\rm int}(K) \cap W$ is a real-analytic subvariety of $W$.  Let $\{ \Sigma_{k} \}$ be the E-filtration of $\Sigma$ in $W$ with respect to $\Phi$.  Then for each $k$, the dimension of $\Sigma_{k}$ is no more than $\mbox{dim} (\Sigma) - k$. \end{lemma}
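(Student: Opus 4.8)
The plan is to argue by induction on $k$, the essential point at each stage being that passing from $\Sigma_{k-1}$ to $\Sigma_k = (\Sigma_{k-1})_\Phi$ strictly decreases the dimension. Since $\Sigma_0 = \Sigma$, the base case $\mbox{dim}(\Sigma_0) = \mbox{dim}(\Sigma)$ is immediate. If one can show $\mbox{dim}(\Sigma_k) \leq \mbox{dim}(\Sigma_{k-1}) - 1$ whenever $\Sigma_{k-1} \neq \emptyset$, then the inductive hypothesis $\mbox{dim}(\Sigma_{k-1}) \leq \mbox{dim}(\Sigma) - (k-1)$ yields $\mbox{dim}(\Sigma_k) \leq \mbox{dim}(\Sigma) - k$ at once (and if some $\Sigma_{k-1}$ is empty, all later sets are empty and the bound is trivial). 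By construction $\Sigma_k$ is a real-analytic subvariety of $W_k \subset W_{k-1}$ with $\Sigma_k \subset (\Sigma_{k-1})_{\mbox{\tiny reg}} \subset \Sigma_{k-1}$, so by the standard fact recalled above (a subvariety of $\Sigma_{k-1}$ with empty interior relative to $\Sigma_{k-1}$ has strictly smaller dimension), it suffices to prove that $\Sigma_k$ has empty interior relative to $\Sigma_{k-1}$. Since $\Sigma_{\mbox{\tiny sing}}$ is closed, the regular set $(\Sigma_{k-1})_{\mbox{\tiny reg}}$ is relatively open in $\Sigma_{k-1}$, and as $\Sigma_k$ is contained in it, this is equivalent to showing that $(\Sigma_{k-1})_\Phi$ has empty interior in the manifold $(\Sigma_{k-1})_{\mbox{\tiny reg}}$.

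Suppose, toward a contradiction, that $(\Sigma_{k-1})_\Phi$ has nonempty interior in $(\Sigma_{k-1})_{\mbox{\tiny reg}}$. Then there is a point $p \in (\Sigma_{k-1})_\Phi$ with a relatively open neighborhood in $(\Sigma_{k-1})_{\mbox{\tiny reg}}$ contained in $(\Sigma_{k-1})_\Phi$. Because the local dimension $d(\cdot)$ is locally constant on the regular set, I would shrink this neighborhood to a chart on which $(\Sigma_{k-1})_{\mbox{\tiny reg}}$ is a genuine $m$-dimensional real-analytic manifold $M$, and then select a compact set $\overline{U}_0 \subset M$ with nonempty interior in $M$ and with $\overline{U}_0 \subset (\Sigma_{k-1})_\Phi$. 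Note that $\overline{U}_0 \subset \Sigma \subset {\rm int}(K)$, so the functions of $\Phi$, being real-analytic in a neighborhood of $K$, are in particular $C^1$ near $\overline{U}_0$.

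Next I would set $\mathfrak{A} = \overline{A\big|_{\overline{U}_0}}$, a uniform algebra on the compact metrizable (hence first countable and separable) space $\overline{U}_0$, generated by the restrictions $\Phi\big|_{\overline{U}_0}$. By Lemma~\ref{inheritance lemma}, the maximal ideal space of $\mathfrak{A}$ is $\overline{U}_0$ and $\mathfrak{A}$ inherits the countable approximation property. Lemma~\ref{exceptional set empty interior}, applied to the manifold $M$, the compact set $\overline{U}_0$, and the algebra $\mathfrak{A}$, then asserts that the set $E = \{ q \in \overline{U}_0 : df_1 \wedge \cdots \wedge df_m(q) = 0 \mbox{ for all } f_1, \ldots, f_m \in \Phi \}$ has empty interior in $M$. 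But since $M$ has dimension exactly $m$, the definition of the exceptional set forces every point of $\overline{U}_0 \subset (\Sigma_{k-1})_\Phi$ to lie in $E$; thus $\overline{U}_0 \subset E$, and $\overline{U}_0$ has nonempty interior in $M$, a contradiction. Hence $(\Sigma_{k-1})_\Phi$ has empty interior in $(\Sigma_{k-1})_{\mbox{\tiny reg}}$, which closes the induction.

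The main obstacle is precisely the passage from the abstract variety language of the E-filtration to the concrete manifold hypotheses of Lemma~\ref{exceptional set empty interior}: one must localize to a chart of the regular set on which the dimension is a fixed constant $m$, so that the $m$-fold wedge condition defining $(\Sigma_{k-1})_\Phi$ matches the exceptional-set condition of that lemma, and one must confirm via Lemma~\ref{inheritance lemma} that the restricted algebra $\overline{A\big|_{\overline{U}_0}}$ retains the countable approximation property so that the lemma is genuinely applicable. Once this bookkeeping is arranged, the dimension drop, and with it the stated bound $\mbox{dim}(\Sigma_k) \leq \mbox{dim}(\Sigma) - k$, follows directly.
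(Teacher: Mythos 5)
Your proposal is correct and follows essentially the same route as the paper's proof: induction on $k$, localizing to a chart of $(\Sigma_{k-1})_{\mbox{\tiny reg}}$ of constant dimension $m$, invoking Lemma~\ref{inheritance lemma} to transfer the countable approximation property to $\overline{A\big|_{\overline{U}_0}}$, and applying Lemma~\ref{exceptional set empty interior} to conclude the exceptional set has empty interior, whence the dimension drops by one at each stage. The only cosmetic difference is that you frame the empty-interior step as a proof by contradiction, whereas the paper argues directly at an arbitrary regular point; the substance is identical.
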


\begin{proof} Let $d = \mbox{dim}(\Sigma)$. The proof is by induction on $k$. The result is clear when $k = 0$.  Suppose we have shown for some $k$ that $\mbox{dim}(\Sigma_{k})\leq d - k$.  Fix $p \in (\Sigma_{k})_{\mbox{\tiny reg}}$, and let $U$ be a smoothly bounded
neighborhood of $p$ in $(\Sigma_{k})_{\mbox{\tiny reg}}$ with $\overline{U} \subset (\Sigma_{k})_{\mbox{\tiny reg}}$.  We may assume that
$U$ has constant dimension (by induction, no more than $d - k$) as a submanifold of
$\mathbb{R}^n$.  Lemma~\ref{inheritance lemma} implies that we may apply
Lemma~\ref{exceptional set empty interior} taking $M =\overline{U}$ and replacing $A$ with $\overline{A\big|_{\overline{U}}}$.  The conclusion implies that $\Sigma_{k+1} = (\Sigma_{k})_{\Phi}$ has no interior in $U$.  Since $p$ was arbitrary, we conclude that
\[ \mbox{dim}(\Sigma_{k+1}) \leq \mbox{dim}(\Sigma_{k}) - 1 \leq d - (k+1). \]
By induction, the proof is complete. \end{proof}

Note that Lemma~\ref{dimension_lemma} implies, with $d = \mbox{dim}(\Sigma)$,  that $\Sigma_{d}$ is a zero-dimensional variety, i.e., is a discrete set and hence, in particular, is at most countable.

Let $B(p,r)$ denote the open ball of radius $r$ centered at $p \in \mathbb{R}^n$.

\begin{lemma} \label{support_in_S_filtration} With $V,\Omega,K, A, \Phi$ as in Theorem~\ref{support_theorem}, assume $p \in {\rm int}(K)$ and $\mu\in A^\perp$.  If $r > 0$ is such that $B(p,r) \cap V \subset {\rm int}(K)$,
and there is a real-analytic
$d$-dimensional subvariety $\Sigma \subset V$ of $B(p,r)$ with\/ $\mbox{\rm supp}(\mu) \cap B(p,r) \subset \Sigma$, then\/ $\mbox{\rm supp}(\mu) \cap B(p,r)$ is contained in the S--filtration of $\Sigma$, i.e., $\mbox{\rm supp}(\mu) \cap B(p,r)\subset
\bigcup\limits_{k=0}^{d-1} (\Sigma_{k})_{\mbox{\rm\tiny sing}}$.
\end{lemma}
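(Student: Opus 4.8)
The plan is to prove, by induction on $k$, the containment
\[ \mathrm{supp}(\mu)\cap B(p,r)\ \subset\ \Sigma_k\cup\bigcup_{j=0}^{k-1}(\Sigma_j)_{\mathrm{sing}}. \]
The case $k=0$ is precisely the hypothesis $\mathrm{supp}(\mu)\cap B(p,r)\subset\Sigma=\Sigma_0$, and since $\dim\Sigma_k\leq d-k$ by Lemma~\ref{dimension_lemma}, reaching this statement at $k=d$ leaves only the discrete variety $\Sigma_d$ to be removed by a final argument. The key device, which sidesteps any attempt to localize the annihilating measure, is to keep the \emph{entire} support as the ambient compact set: set $Z=\mathrm{supp}(\mu)$ and $\mathfrak{A}=\overline{A\big|_Z}$. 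Since $\mu$ is supported on $Z$ we have $\int_Z f\,d\mu=\int_K f\,d\mu=0$ for all $f\in A$, so $\mu\in\mathfrak{A}^\perp$; and by Lemma~\ref{inheritance lemma} the maximal ideal space of $\mathfrak{A}$ is $Z$ and $\mathfrak{A}$ has the countable approximation property. The set $E$ against which Theorem~\ref{approximation_theorem} is applied, by contrast, will be chosen \emph{locally}.

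For the step from $k$ to $k+1$, fix $r'<r$ and consider the \lq\lq good set\rq\rq\ and its complement in $Z$,
\[ G\ =\ Z\cap B(p,r')\cap\big((\Sigma_k)_{\mathrm{reg}}\setminus\Sigma_{k+1}\big),\qquad E=Z\setminus G, \]
recalling $\Sigma_{k+1}=(\Sigma_k)_\Phi\subset(\Sigma_k)_{\mathrm{reg}}$. First I would verify that $G$ is relatively open in $Z$, so that $E$ is compact. This is exactly where the inductive hypothesis enters: a point $x\in G$ lies in $W_k$ (which is open and disjoint from $(\Sigma_j)_{\mathrm{sing}}$ for $j<k$), lies in the open manifold $(\Sigma_k)_{\mathrm{reg}}=\Sigma_k\setminus(\Sigma_k)_{\mathrm{sing}}$, and avoids the relatively closed set $\Sigma_{k+1}\subset W_{k+1}$ (Lemma~\ref{exceptional_set_variety}); hence a small $\mathbb{R}^n$-neighborhood $O\subset B(p,r')$ of $x$ avoids all these closed sets, and then the inductive hypothesis forces $O\cap Z\subset(\Sigma_k)_{\mathrm{reg}}\setminus\Sigma_{k+1}$, i.e. $O\cap Z\subset G$. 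Thus $Z\setminus E=G$ is an open subset of $(\Sigma_k)_{\mathrm{reg}}$. Because $G$ misses the exceptional set $\Sigma_{k+1}=(\Sigma_k)_\Phi$, condition~(1) of Theorem~\ref{approximation_theorem} holds at every point of $G$ with generators $f_1,\dots,f_m\in\Phi\subset A$, and the generators $\Phi$, which separate points of $K\supset Z$, supply condition~(2). Partitioning $G$ into its relatively open pieces of constant dimension (dimension being locally constant on the regular set) to meet the single-dimension requirement and applying Theorem~\ref{approximation_theorem} to $\mathfrak{A}$ on $Z$ for each piece, I obtain $\mathfrak{A}\supset\{g\in C(Z):g|_E=0\}$. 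Since $\mu\in\mathfrak{A}^\perp$ annihilates every continuous function on $Z$ vanishing on $E$, we get $\mathrm{supp}(\mu)=Z\subset E=Z\setminus G$, forcing $G=\emptyset$. Letting $r'\uparrow r$ shows $\mathrm{supp}(\mu)\cap B(p,r)$ misses $(\Sigma_k)_{\mathrm{reg}}\setminus\Sigma_{k+1}$, which with the inductive hypothesis and $\Sigma_k=\big((\Sigma_k)_{\mathrm{reg}}\setminus\Sigma_{k+1}\big)\cup\Sigma_{k+1}\cup(\Sigma_k)_{\mathrm{sing}}$ yields the statement for $k+1$.

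At $k=d$ the containment reads $\mathrm{supp}(\mu)\cap B(p,r)\subset\Sigma_d\cup\bigcup_{j=0}^{d-1}(\Sigma_j)_{\mathrm{sing}}$, and $\Sigma_d$ is discrete. A point $q\in\Sigma_d\cap B(p,r)$ lies in the open set $W_d$, hence avoids the sets $(\Sigma_j)_{\mathrm{sing}}$ for $j<d$, and is isolated in $\Sigma_d$; so some neighborhood of $q$ inside $B(p,r)$ meets $\Sigma_d\cup\bigcup_{j<d}(\Sigma_j)_{\mathrm{sing}}$, and therefore meets $\mathrm{supp}(\mu)$, only in $\{q\}$. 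By Lemmas~\ref{inheritance lemma} and \ref{no isolated support} the support of $\mu$ has no isolated points, so $q\notin\mathrm{supp}(\mu)$. This removes the term $\Sigma_d$ and gives $\mathrm{supp}(\mu)\cap B(p,r)\subset\bigcup_{k=0}^{d-1}(\Sigma_k)_{\mathrm{sing}}$, as desired.

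The step I expect to be the main obstacle is the choice and justification of the pair $(Z,E)$. Taking $Z$ to be the whole support is what keeps $\mu$ annihilating for $\mathfrak{A}$, but one must then check that the locally-defined $E=Z\setminus G$ really is closed in $Z$ and that $Z\setminus E$ is a constant-dimension manifold on which conditions~(1)--(2) genuinely hold. Both points rest on the inductive hypothesis confining the support inside $B(p,r)$ together with the relative closedness of the strata $(\Sigma_j)_{\mathrm{sing}}$ and $\Sigma_{k+1}$ in the nested open sets $W_j$; getting this topological bookkeeping right, rather than the invocation of Theorem~\ref{approximation_theorem} itself, is the delicate part.
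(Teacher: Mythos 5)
Your overall architecture --- induction over the levels of the \emph{E}-filtration, with Theorem~\ref{approximation_theorem} peeling off the top stratum at each step, and Lemmas~\ref{inheritance lemma} and~\ref{no isolated support} disposing of the discrete set $\Sigma_d$ at the end --- is the same as the paper's, and your endgame for $\Sigma_d$ matches the paper's almost verbatim. But there is a genuine gap at precisely the step you flagged as delicate: the choice $X=Z=\mathrm{supp}(\mu)$ makes the manifold hypothesis of Theorem~\ref{approximation_theorem} fail. What your openness argument actually establishes is that $G=Z\cap B(p,r')\cap\bigl((\Sigma_k)_{\mathrm{reg}}\setminus\Sigma_{k+1}\bigr)$ is relatively open \emph{in $Z$} (you prove $O\cap Z\subset G$, not $O\cap(\Sigma_k)_{\mathrm{reg}}\subset G$); the subsequent assertion that $G$ ``is an open subset of $(\Sigma_k)_{\mathrm{reg}}$'' is false, since points of $(\Sigma_k)_{\mathrm{reg}}$ near a point of $G$ need not lie in $Z$. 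Theorem~\ref{approximation_theorem} requires $X\setminus E$ to \emph{be} an $m$-dimensional manifold, whereas your $G$ is merely a relatively open subset of the compact set $Z$ sitting inside a manifold. A priori $\mathrm{supp}(\mu)$ could be, say, a Cantor-type set lying in $(\Sigma_k)_{\mathrm{reg}}$ --- Lemma~\ref{no isolated support} rules out isolated points, not total disconnectedness --- in which case $G$ is a nonempty totally disconnected set, not a manifold of positive dimension. So the theorem cannot be invoked as a black box, and partitioning $G$ into constant-dimension pieces does not repair this: each piece has the same defect.

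The paper evades exactly this difficulty by \emph{enlarging} the ambient compact set rather than shrinking it to the support: it takes $X=(K\setminus B(p,r))\cup\bigl(\bigcup_{j=0}^{L}(\Sigma_j)_{\mathrm{sing}}\bigr)\cup\Sigma_{L+1}$ and $E=(K\setminus B(p,r))\cup\bigl(\bigcup_{j=0}^{L+1}(\Sigma_j)_{\mathrm{sing}}\bigr)\cup\Sigma_{L+2}$. The term $K\setminus B(p,r)$ plays the role your $Z$ was meant to play (nothing is known about $\mathrm{supp}(\mu)$ outside the ball, so that part is simply carried along inside both $X$ and $E$), while including the \emph{whole} stratum $\Sigma_{L+1}$ --- not just its intersection with the support --- makes $X\setminus E=(\Sigma_{L+1})_{\mathrm{reg}}\setminus\Sigma_{L+2}$ a genuine open subset of the regular set, hence an honest manifold on which conditions (1) and (2) hold. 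The inductive hypothesis still gives $\mathrm{supp}(\mu)\subset X$, which is all that is needed: Lemma~\ref{inheritance lemma} identifies $\mathfrak{M}_{\overline{A|_X}}$ with $X$, Theorem~\ref{approximation_theorem} then shows every $g\in C(K)$ vanishing on $E$ restricts to an element of $\overline{A|_X}$, and integration against $\mu$ forces $\mathrm{supp}(\mu)\subset E$. If you replace your $Z$ by this larger $X$, your argument becomes essentially the paper's proof; the rest of your bookkeeping --- the closedness of the strata in the nested open sets $W_j$ via Lemma~\ref{exceptional_set_variety}, and the removal of $\Sigma_d$ --- is sound.
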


\begin{proof}
We will show by induction on $L$ that
\[ \mbox{supp}(\mu) \cap B(p,r) \subset \left (\,\bigcup_{k=0}^{L} (\Sigma_{k})_{\mbox{\tiny sing}} \right ) \cup \Sigma_{L+1} \]
for each $L$, $0 \leq L \leq d-1$.
This suffices: since $\Sigma_{d}$ is discrete and disjoint from
the relatively closed subset $\left (\,\bigcup_{k=0}^{d-1} (\Sigma_{k})_{\mbox{\tiny sing}} \right )$ of $B(p,r)$, each point of $\Sigma_{d}$ is an isolated point of the set  $\left (\,\bigcup_{k=0}^{d-1} (\Sigma_{k})_{\mbox{\tiny sing}} \right ) \cup \Sigma_{d}$, and so
Lemmas~\ref{inheritance lemma} and~\ref{no isolated support} imply that $\mu(\Sigma_d)=0$.

For the $L = 0$ case, let $X = (K \setminus B(p,r)) \cup \Sigma_{0} $ and let $E =(K \setminus B(p,r)) \cup (\Sigma_{0})_{\mbox{\tiny sing}} \cup \Sigma_{1}$.  Note that both $X$ and $E$ are closed.  We want to show that $\mbox{supp}(\mu)\subset E$.   By Lemma~\ref{inheritance lemma}, the maximal ideal space of
$\overline{A|X}$ is $X$.  Note that $X \setminus E = \Sigma_{0} \setminus \bigl((\Sigma_{0})_{\mbox{\tiny sing}} \cup \Sigma_{1}\bigr)$
satisfies the hypotheses of Theorem~\ref{approximation_theorem}.
  Therefore by Theorem~\ref{approximation_theorem}, if $g \in C(K)$ vanishes on $E$, then $g|X$ belongs to
  $\ov{A|X}$.  Since by hypothesis $\mbox{supp}(\mu) \subset X$, we get that $\int_{K} g \; d\mu = 0$ for each $g \in C(K)$ vanishing on $E$, and this implies that $\mbox{supp}(\mu) \subset E$, as desired.

The general induction step is similar: assuming the result for some $0\leq L < d-1$, we set
\[ X = (K \setminus B(p,r)) \cup \left ( \bigcup_{k=0}^{L} (\Sigma_{k})_{\mbox{\tiny sing}} \right ) \cup \Sigma_{L+1}, \]
\[ E = (K \setminus B(p,r)) \cup \left ( \bigcup_{k=0}^{L+1} (\Sigma_{k})_{\mbox{\tiny sing}} \right ) \cup \Sigma_{L+2}. \]
Both $X$ and $E$ are closed.  Noting that the induction hypothesis implies that $\mbox{supp}(\mu) \subset X$, we apply Theorem~\ref{approximation_theorem} to $\ov{A|X}$ as above, to conclude that $\mbox{supp}(\mu) \subset E$.  \end{proof}

\begin{lemma} \label{support_in_boundary} With $V,\Omega,K, A, \Phi$ as in Theorem~\ref{support_theorem}, assume $p \in {\rm int}(K)$
and $\mu\in A^\perp$.  Assume also that there exist $r > 0$ with $B(p,r) \cap V \subset {\rm int}(K)$
and a real-analytic subvariety $\Sigma \subset V$ of $B(p,r)$ with\/ $\mbox{\rm supp}(\mu) \cap B(p,r)$ contained in the S--filtration of $\Sigma$ in $B(p,r)$.  Then there exists $r^{\prime} > 0$ such that\/ $\mbox{\rm supp}(\mu) \cap B(p,r^{\prime}) = \emptyset$. \end{lemma}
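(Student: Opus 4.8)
The plan is to argue by induction on $d = \dim \Sigma$, and the whole argument runs on a single reduction obtained by combining Lemma~\ref{support_in_S_filtration} with the inductive hypothesis. Namely, suppose the present lemma is already known for every dimension smaller than $d$. I claim the following \emph{reduction} holds: if $q \in {\rm int}(K)$, if $B(q,\epsilon) \cap V \subset {\rm int}(K)$, and if ${\rm supp}(\mu) \cap B(q,\epsilon)$ is contained in a real-analytic subvariety $\Sigma' \subset V$ of $B(q,\epsilon)$ with $\dim \Sigma' < d$, then there is $\epsilon' > 0$ with ${\rm supp}(\mu) \cap B(q,\epsilon') = \emptyset$. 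Indeed, Lemma~\ref{support_in_S_filtration} places ${\rm supp}(\mu) \cap B(q,\epsilon)$ in the S-filtration of $\Sigma'$, and then the present lemma at dimension $\dim \Sigma' < d$ yields local emptiness. The base case $d = 0$ is immediate: the S-filtration $\bigcup_{k=0}^{-1}(\Sigma_{k})_{\mbox{\tiny sing}}$ is empty, so the hypothesis already gives ${\rm supp}(\mu) \cap B(p,r) = \emptyset$.

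For the inductive step, write $\{\Sigma_{k}\}$ for the E-filtration of $\Sigma = \Sigma_{0}$ in $B(p,r)$, so by hypothesis ${\rm supp}(\mu) \cap B(p,r) \subset \bigcup_{k=0}^{d-1}(\Sigma_{k})_{\mbox{\tiny sing}}$. Rather than trying to find a single lower-dimensional subvariety through $p$ (the union of singular sets need not lie in one), I would show that \emph{every} point of ${\rm supp}(\mu) \cap B(p,r)$ leads to a contradiction via the reduction above, so that ${\rm supp}(\mu) \cap B(p,r) = \emptyset$ and one may take $r' = r$. Split $B(p,r)$ as $W_{1} \cup (\Sigma_{0})_{\mbox{\tiny sing}}$, where $W_{1} = B(p,r) \setminus (\Sigma_{0})_{\mbox{\tiny sing}}$ is open. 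On $W_{1}$ the top stratum drops out, and since $\Sigma_{k} \subset (\Sigma_{k-1})_{\mbox{\tiny reg}} \subset \Sigma_{1}$ for $k \geq 1$, we get ${\rm supp}(\mu) \cap W_{1} \subset \bigcup_{k=1}^{d-1}(\Sigma_{k})_{\mbox{\tiny sing}} \subset \Sigma_{1}$. By Lemma~\ref{exceptional_set_variety}, $\Sigma_{1} = (\Sigma_{0})_{\Phi}$ is a subvariety of $W_{1}$, by Lemma~\ref{dimension_lemma} it has $\dim \Sigma_{1} \leq d-1$, and $\Sigma_{1} \subset \Sigma \subset V$. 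Hence for any $q \in {\rm supp}(\mu) \cap W_{1}$, choosing $\epsilon$ with $B(q,\epsilon) \subset W_{1} \cap B(p,r)$ gives ${\rm supp}(\mu) \cap B(q,\epsilon) \subset \Sigma_{1} \cap B(q,\epsilon)$, a subvariety of $B(q,\epsilon)$ of dimension $< d$ lying in $V$; the reduction forces emptiness near $q$, contradicting $q \in {\rm supp}(\mu)$. Thus ${\rm supp}(\mu) \cap W_{1} = \emptyset$.

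It remains to treat ${\rm supp}(\mu) \cap (\Sigma_{0})_{\mbox{\tiny sing}}$. For any point $q$ there, the standard structure theory of real-analytic varieties recalled in Section~\ref{prelims} gives a neighborhood $N$ of $q$ and a proper subvariety $Y \subset \Sigma_{0} \subset V$ of $N$ with $(\Sigma_{0})_{\mbox{\tiny sing}} \cap N \subset Y$ and $\dim Y < \dim \Sigma_{0} = d$. Shrinking to $B(q,\epsilon) \subset N \cap B(p,r)$ yields ${\rm supp}(\mu) \cap B(q,\epsilon) \subset Y \cap B(q,\epsilon)$, once again a subvariety of dimension $< d$ inside $V$, so the reduction again produces a neighborhood of $q$ missing ${\rm supp}(\mu)$, contradicting $q \in {\rm supp}(\mu)$. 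Therefore ${\rm supp}(\mu) \cap (\Sigma_{0})_{\mbox{\tiny sing}} = \emptyset$, and combining with the previous paragraph gives ${\rm supp}(\mu) \cap B(p,r) = \emptyset$, completing the induction.

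The step I expect to be the main obstacle is setting up the induction so that the union $\bigcup_{k}(\Sigma_{k})_{\mbox{\tiny sing}}$ is handled coherently. The conceptual crux is to argue pointwise—invoking the combined force of Lemma~\ref{support_in_S_filtration} and the inductive hypothesis at every support point—rather than seeking one lower-dimensional subvariety at $p$, together with verifying the two geometric inclusions that make each support point sit in a subvariety of dimension $< d$ contained in $V$: the inclusion $\bigcup_{k \geq 1}(\Sigma_{k})_{\mbox{\tiny sing}} \subset \Sigma_{1}$ with $\dim \Sigma_{1} < d$ on the regular part (via Lemmas~\ref{exceptional_set_variety} and~\ref{dimension_lemma}), and the local containment of $(\Sigma_{0})_{\mbox{\tiny sing}}$ on the singular part. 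Keeping all auxiliary varieties inside $V$ (by taking $Y \subset \Sigma_{0}$, say) is the bookkeeping one must not skip, since the recursive hypotheses require $\Sigma' \subset V$.
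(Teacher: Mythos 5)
Your proof is correct, and it reaches the paper's conclusion by a genuinely different organization of the induction. The paper's proof is a nested double induction: the outer induction is on $d=\dim\Sigma$ (as in your proposal), but inside it the paper runs a second induction on the level $L$ of the filtration, proving $\mathrm{supp}(\mu)\cap B(p,r)\subset\bigcup_{k=0}^{d-1-L}(\Sigma_{k})_{\mathrm{sing}}$ and peeling off the deepest singular stratum $(\Sigma_{d-1-L})_{\mathrm{sing}}$ at each step; near each point $q$ of that stratum it uses the fact that $q$ lies in $W_{d-1-L}$, so a small ball avoids all shallower singular sets, then covers $(\Sigma_{d-1-L})_{\mathrm{sing}}$ locally by a proper subvariety $Y$ with $\dim Y<\dim\Sigma_{d-1-L}\leq L+1$ (via Lemma~\ref{dimension_lemma}) and invokes Lemma~\ref{support_in_S_filtration} plus the outer induction, exactly as in your ``reduction.'' You replace the entire inner induction with the single observation that $(\Sigma_{k})_{\mathrm{sing}}\subset\Sigma_{k}\subset\Sigma_{1}$ for all $k\geq 1$, where $\Sigma_{1}=(\Sigma_{0})_{\Phi}$ is, by Lemmas~\ref{exceptional_set_variety} and~\ref{dimension_lemma}, an honest subvariety of $W_{1}=B(p,r)\setminus(\Sigma_{0})_{\mathrm{sing}}$ of dimension $\leq d-1$ contained in $V$; thus the whole tail of the S-filtration is swallowed in one application of the dimension induction, and only the level-zero singular set still needs the local-$Y$ covering, which you apply pointwise. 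Your version is shorter and yields the formally stronger conclusion $\mathrm{supp}(\mu)\cap B(p,r)=\emptyset$ (i.e., $r'=r$), whereas the paper localizes at $p$ in its final step; the stronger statement is in fact implicit in the paper's argument too, since its inner induction already removes each $(\Sigma_{k})_{\mathrm{sing}}$, $k\geq1$, globally from $B(p,r)$. Three small remarks: the displayed chain ``$\Sigma_{k}\subset(\Sigma_{k-1})_{\mathrm{reg}}\subset\Sigma_{1}$'' is a slip — you mean the iterated inclusion $\Sigma_{k}\subset\Sigma_{k-1}\subset\cdots\subset\Sigma_{1}$; your base case reads the S-filtration of a zero-dimensional variety as the empty union $\bigcup_{k=0}^{-1}(\Sigma_{k})_{\mathrm{sing}}$, which is consistent with the formula in Lemma~\ref{support_in_S_filtration}, while the paper instead argues from discreteness via Lemmas~\ref{inheritance lemma} and~\ref{no isolated support} (either route is fine, and yours requires no extra lemma); and your insistence that $Y\subset\Sigma_{0}\subset V$ is exactly the same appeal the paper makes when it writes $(\Sigma_{d-1-L})_{\mathrm{sing}}\subset Y\subset V$, justified by the quoted standard fact that the singular set is locally contained in a proper subvariety \emph{of} $\Sigma$, so you are on equal footing with the paper there.
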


\begin{proof} We apply induction on the dimension of $\Sigma$.  If $\mbox{dim}(\Sigma) = 0$, then $\Sigma$ is discrete, and by Lemma~\ref{no isolated support}
this implies $|\mu|(B(p,r)) = |\mu|(\Sigma) = 0$.  Now suppose the conclusion of the Lemma~holds whenever $\mbox{dim}(\Sigma) < d$.  If $\mbox{dim}(\Sigma) = d$, let $\Sigma_{0}, \ldots, \Sigma_{d}$ be the \emph{E}-filtration of $\Sigma$ in $B(p,r)$.  (Recall that $\Sigma_{d}$ is discrete.) By induction on $L$ we will show that
\begin{equation} \mbox{supp}(\mu) \cap B(p,r) \subset \bigcup_{k=0}^{d-1-L} (\Sigma_{k})_{\mbox{\tiny sing}} \label{eq:supp} \end{equation}
for $L = 0, \ldots , d-1$.  The case $L = 0$ is the hypothesis of the Lemma.  Assume we have established (\ref{eq:supp}) for some $L$, $0 \leq L < d-1$.
To show that (\ref{eq:supp}) holds with $L$ replaced by $L + 1$, we must show that $\mbox{supp}(\mu) \cap (\Sigma_{d-1-L})_{\mbox{\tiny sing}} = \emptyset$.  Fix $q \in (\Sigma_{d-1-L})_{\mbox{\tiny sing}}$.  By construction of the $\Sigma_{k}$, there exists $s > 0$ so that $B(q,s) \subset B(p,r)$ and $B(q,s) \cap (\Sigma_{k})_{\mbox{\tiny sing}} = \emptyset$ for all $k < d-1-L$. Therefore the induction hypothesis implies that $\mbox{supp}(\mu) \cap B(q,s) \subset (\Sigma_{d-1-L})_{\mbox{\tiny sing}}$.  Replacing $s$ by a smaller positive number if necessary, we may assume that there is a real-analytic subvariety $Y$ of $B(q,s)$ with $(\Sigma_{d-1-L})_{\mbox{\tiny sing}} \subset Y \subset V$ and $\mbox{dim}(Y) < \mbox{dim}(\Sigma_{d-1-L}) \leq L + 1 \leq d$ (the next-to-last inequality following from Lemma~\ref{dimension_lemma}).  By Lemma~\ref{support_in_S_filtration}, $\mbox{supp}(\mu) \cap B(q,s)$ is contained in the \emph{S}-filtration of $Y$ in $B(q,s)$.  Now note that our induction hypothesis on dimension implies that the conclusion of Lemma~\ref{support_in_boundary} holds with $\Sigma$ replaced by $Y$, since $\mbox{dim}(Y) < d$.  We conclude that there exists $s'>0$ such that $\mbox{supp}(\mu) \cap B(q,s') = \emptyset$.  Since $q  \in (\Sigma_{d-1-L})_{\mbox{\tiny sing}}$ was arbitrary, this shows that $\mbox{supp}(\mu) \cap (\Sigma_{d-1-L})_{\mbox{\tiny sing}} = \emptyset$ and completes the proof that (\ref{eq:supp}) holds for $L = 0, \ldots , d-1$.

Finally, the case $L = d-1$ of (\ref{eq:supp}) asserts that $\mbox{supp}(\mu) \cap B(p,r) \subset (\Sigma_{0})_{\mbox{\tiny sing}}$.   We may choose $t$ with $0 < t< r$ and a subvariety $Y \subset V$ of $B(p,t)$ so that
$(\Sigma_{0})_{\mbox{\tiny sing}}\cap B(p,t) \subset Y$ and $\mbox{dim}(Y) < \mbox{dim}(\Sigma) = d$.
By Lemma~\ref{support_in_S_filtration}, $\mbox{supp}(\mu) \cap B(p,t)$ is contained in the \emph{S}-filtration of $Y$ in $B(p,t)$.  Again applying our induction hypothesis on dimension, we conclude that there exists $r'>0$ such that $\mbox{supp}(\mu) \cap B(p,r') = \emptyset$.  This completes the proof.  \end{proof}

We can now finish the proof of Theorem~\ref{support_theorem}.  Let $\mu \in A^\perp$ be given.  Fix $p \in {\rm int}(K)$, and $r > 0$ such that $B(p,r) \cap V \subset \mbox{int}(K)$.  Taking $\Sigma = V \cap B(p,r)$ in Lemma~\ref{support_in_S_filtration}, we see that $\mbox{supp}(\mu) \cap B(p,r)$ is contained in the \emph{S}--filtration of $V \cap B(p,r)$.  By Lemma~\ref{support_in_boundary}, there is an $r'>0$ such that $\mu$ has no support in $B(p,r')$, concluding the proof.
\bigskip\eject

\end{document}